\def\mono{\bm{i},\bm{a}}
\def\K{\text{\rm{Ker }}}
\newcommand{\bmk}{{\bm{k}}}
\newtheorem{Thm}{Theorem}[section]
\newtheorem{Rk}[Thm]{Remark}
\newtheorem{Def}[Thm]{Definition}
\newtheorem{Prop}[Thm]{Proposition}
\newtheorem{Lem}[Thm]{Lemma}
\newtheorem{Cor}[Thm]{Corollary}
\begin{document}

\title{Higher Cohomology Vanishing of line bundles
on generalized Springer resolution}

\author{Yue Hu}
\address{Academy of Mathematics and Systems Science, University of Chinese Academy of Sciences, Beijing, China}
\email{huyue@amss.ac.cn}
\date{}

\begin{abstract}
We give a proof of a conjecture raised by Michael Finkelberg and Andrei Ionov\cite{FI} based on a certain generalization of Springer resolution and Grauert-Riemenschneider vanishing theorem. As a corollary, the coefficients of multivariable version of Kostka functions introduced by Finkelberg and Ionov are non-negative.
\end{abstract}

\maketitle

\section{Introduction}
In \cite{FI}, Finkelberg and Ionov proposed the following conjecture about higher cohomology vanishing of line bundles.
Let $r\geqslant 2$ be an integer, $\bm{k}$ an algebraically closed field.
Let $A_{1\leqslant i\leqslant r-1}$ be a collection of upper triangular $N\times N$ matrices over $\bm{k}$, let $A_{r}$ be a strictly upper triangular $N\times N$ matrix over $\bm{k}$, and let
$B_{N}$ be the group of upper triangular matrices in $GL_{N}$.
The group $B_{N}^{r}$ acts on the vector space formed by the $r$-tuples
$\bm{\mathfrak{n}_{r}}:=\{(A_{1},\dots,A_{r})\}$ by
\[(b_{1},\dots,b_{r})\cdot(a_{1},\dots,a_{r})=(b_{1}a_{1}b_{2}^{-1},\dots,b_{r}a_{r}b_{1}^{-1})\]
Consider the associated vector bundle $\mathcal{T}_{r}^{*}\mathcal{B}_{N}^{r}:=GL_{N}^{r}\times^{B_{N}^{r}}\bm{\mathfrak{n}_{r}}$ over the flag variety $\mathcal{B}_{N}^{r}$.
Let $\mathcal{O}(\bm{\lambda})$ be the pull back of a line bundle $\mathcal{L}(\bm{k}_{\bm{-\lambda}})$ over $\mathcal{B}_{N}^{r}$ to $\mathcal{T}_{r}^{*}\mathcal{B}_{N}^{r}$, where $\bm{\lambda}$ is in the weight lattice of $GL_{N}^{r}$.
The conjecture claims that $H^{i>0}(\mathcal{T}_{r}^{*}\mathcal{B}_{N}^{r},\mathcal{O}(\bm{\lambda}))=0$ for any dominant weight $\bm{\lambda}$.
In this paper, we prove this vanishing theorem both for the characteristic $0$ and $p>0$ case.

Finkelberg and Ionov proposed this vanishing theorem for the goal of generalizing Kostka polynomials to multivariable case.
Let us give some historic remarks and fix some notations.
Let $G$ be a connected split reductive group over $\bm{k}$.
Choose a pair $T\subset B$ of a maximal torus and a Borel subgroup corresponding to the positive roots. We consider the flag variety $G/B$.
For any weight $\lambda\in X(T)$, let $\bm{k}_{\lambda}$ be the corresponding one dimensional $T$-module: $t\cdot\upsilon_{\lambda}=\lambda(t)\cdot\upsilon_{\lambda}$.
Denote by $\mathcal{L}_{G/B}(\bm{k}_{\lambda})$ the associated sheaf of the homogeneous line bundle $G\times^{B}\bm{k}_{\lambda}$ on $G/B$.
We have $\mathcal{L}_{G/B}(\bm{k}_{-\lambda})\simeq\mathcal{L}_{G/B}(\bm{k}_{\lambda})^{\vee}$ canonically, where $\mathcal{L}_{G/B}(\bm{k}_{\lambda})^{\vee}$ is the dual line bundle.
The Borel-Bott-Weil theorem describes the higher cohomology of the line bundle $\mathcal{L}_{G/B}(\bm{k}_{-\lambda})$.
In particular, the Kempf vanishing theorem says $H^{>0}(G/B,\mathcal{L}_{G/B}(\bm{k}_{-\lambda}))=0$ if $\lambda$ is dominant.

The cotangent bundle $\pi: T^{*}G/B:=G\times^{B}\mathfrak{n}\rightarrow G/B$ of the flag variety has many favorable properties.
By Springer's theory, $\varphi: T^{*}G/B\rightarrow \mathcal{N}$ is the resolution of singularities of the nilpotent cone $\mathcal{N}$, where $\varphi$ is the moment map
\[\psi: G\times^{B}\mathfrak{n}\rightarrow \mathcal{N}:\ (g,x)\mapsto gxg^{-1}\]
It is interesting that the higher cohomology of the pull back of dominant line bundles on $G/B$ to $T^{*}G/B$ also vanishes, namely, we have  $H^{>0}(T^{*}G/B,\mathcal{L}_{T^{*}G/B}(\bm{k}_{-\lambda}))=0$ if $\lambda$ is dominant (we denote by  $\mathcal{L}_{T^{*}G/B}(\bm{k}_{-\lambda}):=\pi^{*}\mathcal{L}_{G/B}(\bm{k}_{-\lambda})$).
When $\bm{k}$ is of characteristic zero, this vanishing theorem was proved by Broer\cite{broer1}, \cite{broer2} using two different methods.
When $\bm{k}$ is of characteristic $p>0$, this theorem was proved by Andersen and Jantzen \cite{AJ} for classical groups and later by Kumar, Lauritzen, and Thomsen \cite{KLT} for arbitrary semisimple and simply connected algebraic groups.

When $\bm{k}$ is of characteristic zero, using the above vanishing theorem, Brylinski \cite{bry} gave a description of the coefficients of Lusztig's \cite{L81} $q$-analog $m_{\lambda}^{\mu}(q)$ of dominant $\mu$-weight multiplicity in a finite dimensional irreducible $\mathfrak{g}$-representation $V_{\lambda}$.
The cohomology groups of line bundles on the cotangent bundle of flag variety have a natural structure of a graded $G$-module given by
\[H^{i}(T^{*}G/B,\pi^{*}\mathcal{L}_{G/B}(\bm{k}_{\lambda}))=\bigoplus_{j=0}^{\infty}H^{i}(G/B,\mathcal{L}_{G/B}(S^{j}\mathfrak{n}^{*}\otimes\bm{k}_{\lambda}))\]
Hence the graded Euler characteristic character associated to $\pi^{*}\mathcal{L}_{G/B}(\bm{k}_{-\mu})$ equals to
\[\chi_{T^{*}G/B}^{-\mu}:=\chi(T^{*}G/B,\pi^{*}\mathcal{L}_{G/B}(\bm{k}_{-\mu}))=\sum_{i,j\geqslant 0}(-1)^{i}\text{ch}(H^{i}(G/B,\mathcal{L}_{G/B}(S^{j}\mathfrak{n}^{*}\otimes \bm{k}_{-\mu})))q^{j} \]
Brylinski showed that for all $\mu\in X(T)$ we have $\chi_{T^{*}G/B}^{-\mu}=\sum_{\lambda\in X^{+}(T)}m_{\lambda}^{\mu}(q)\chi_{\lambda}^{*}$, where $\chi_{\lambda}^{*}$ is the character of the dual of the irreducible $G$-representation $V_{\lambda}$.
Hence the higher cohomology vanishing of $\mathcal{L}_{T^{*}G/B}(\bm{k}_{-\mu})$ for any dominant $\mu$ says that
\[\sum_{j\geqslant 0}\langle V_{\lambda}^{*},\Gamma(G/B,\mathcal{L}_{G/B}(S^{j}\mathfrak{n}^{*}\otimes \bm{k}_{-\mu}))\rangle q^{j}=m_{\lambda}^{\mu}(q)   \]
In particular, when $G=GL_{N}(\bm{k})$, $m_{\lambda}^{\mu}(q)$ coincides with the Kostka polynomials $K_{\lambda,\mu}(q)$ indexed by partitions $\lambda$ and $\mu$ .
Therefore the coefficients of Kostka polynomials are non-negative, which is a well known result in combinatorics.

Kostka polynomials $K_{\bm{\lambda},\bm{\mu}}^{\pm}(q)$ indexed by $r$-multipartitions $\bm{\lambda},\bm{\mu}$ introduced by Shoji\cite{shoji04}, are a generalization of Kostka polynomials.
When $r=2$, Achar and Henderson\cite{AH08} proved that the IC-stalks of the enhanced nilpotent cone are encoded by the Kostka polynomials, thus showing that the coefficients are also non-negative. It is tempting to pursue similar results for arbitrary $r$.

Finkelberg and Ionov \cite{FI} proposed multivariable  Kostka polynomials $K_{\bm{\lambda},\bm{\mu}}(q_{1},\dots,q_{r})$ indexed by generalized $r$-multipartitions $\bm{\lambda}$ and $\bm{\mu}$ for $r\geqslant 2$.
Their construction is a generalization of Lusztig's $m_{\lambda}^{\mu}(q)$ in type $A$.
They conjectured that $K_{\bm{\lambda},\bm{\mu}}(q):=K_{\bm{\lambda},\bm{\mu}}(q,\dots,q)$ coincides with Shoji's version of Kostka functions $K_{\bm{\lambda},\bm{\mu}}^{-}(q)$, which has been proved by Shoji recently\cite{shoji17}.
Similarly to Brylinski's work, they gave a coherent realization of these polynomials by constructing a certain vector bundle $\pi_{r}:\mathcal{T}_{r}^{*}\mathcal{B}_{N}^{r}\rightarrow \mathcal{B}_{N}^{r}$ over the the flag variety $\mathcal{B}_{N}^{r}=\mathcal{B}_{N}\times \cdots \times \mathcal{B}_{N}$ ($r$-factors) of $GL_{N}^{r}=GL_{N}\times \cdots \times GL_{N}$ ($r$-factors).
They showed that the $\mathbb{Z}/r\mathbb{Z}$-graded Euler characteristic equals to  $\chi(\mathcal{T}_{r}^{*}\mathcal{B}_{N}^{r}, \pi_{r}^{*}\mathcal{L}_{\mathcal{B}_{N}^{r}}(\bm{k}_{\bm{-\mu}}))=\sum_{\bm{\lambda}\geqslant \bm{\mu}}K_{\bm{\lambda},\bm{\mu}}(q_{1},\cdots,q_{r})\chi_{\bm{\lambda}}^{*}$.
Here $\chi_{\bm{\lambda}}^{*}$ is the character of $V_{\bm{\lambda}}^{*}$, the dual of irreducible representation of $GL_{N}^{r}$ with highest weight $\bm{\lambda}$.
To prove the coefficients of $K_{\bm{\lambda},\bm{\mu}}(q_{1},\cdots,q_{r})$ are non-negative, they conjectured that $H^{>0}(\mathcal{T}_{r}^{*}\mathcal{B}_{N}^{r}, \pi_{r}^{*}\mathcal{L}_{\mathcal{B}_{N}^{r}}(\bm{k}_{\bm{-\mu}}))=0$ for any dominant $\bm{\mu}$.
When $\bm{k}$ is of characteristic $p>0$, they proved this vanishing theorem for dominant and regular $\bm{\mu}$ by showing that $\mathcal{T}_{r}^{*}\mathcal{B}_{N}^{r}$ is Frobenius split.

In the characterisitc $0$ case, Finkelberg-Ionov's construction is actually a particular case
of Panyushev's generalization of Brylinski's result to homogeneous vector bundles over the flag
variety associated to arbitrary $B$-modules \cite{Pan10}.
Following Broer\cite{broer2}, Panyushev also gave a sufficient condition for the higher cohomology vanishing of line bundles and hence for the nonnegativity of coefficients of his version of generalized Kostka polynomials.
The key step is an application of Kempf's version of Grauert-Riemenschneider vanishing theorem\cite{kempf76}. We refer to \cite{Li17} for a description of Panyushev's result in the reductive group case.
However, Panyushev only considered the one-variable polynomial.
In Finkelberg-Ionov's particular case, we have a finer $\mathbb{Z}/r\mathbb{Z}$-grading for the cohomology, so that the result is a polynomial in $r$ variables.

Inspired by \cite{FI}, D. Orr and M. Shimozono \cite{OS17} for any quiver $Q$ and its acyclic
subquiver $\hat{Q}$, defined the quiver Kostka-Shoji polynomials
$K_{\lambda,\mu}^{Q,\hat{Q}}(q_{Q_{1}})$, using Lustig's convolution diagram $Z^{Q,\hat{Q}}$.
They conjectured that the quiver Kostka-Shoji polynomials also have non-negative coefficients.

This paper is organized as follows.
We consider the above vanishing problem in characteristic $0$ case first.
Specifically, we show that Panyushev's sufficient condition holds in this case by considering the collapsing as a certain generalization of Springer's resolution.
For the characteristic $p>0$ case, note that there is a version of Grauert-Riemenschneider vanishing theorem in characteristic $p>0$ for Frobenius split varieties given by Mehta and van der Kallen\cite{MK92}.
Combining with Finkelberg-Ionov's result, we show that a similar proof also works in this case.
For quivers of $ADE$ types and cyclic, we define the generalized quiver Koskta-Shoji polynomials $K^{\mono}_{\bm{\lambda},\bm{\mu}}(q_{Q_{1}})$ associated to desingularizations of orbit closures given by \cite{Rei03,Sch04} and prove their positivity.

\section{Resolutions of Quiver Representations of $\tilde{A}_{r-1}$}
\label{two}

Let $\bm{k}$ be an algebraically closed field of characteristic zero.
Fixing $r\geqslant 2$, let $\bm{Q}=(Q_{0},Q_{1})$ be the cyclic quiver of type $\tilde{A}_{r-1}$, with vertices $Q_{0}=\{\mathbb{Z}/r\mathbb{Z}\}$ and directed edges $Q_{1}=\{a_{i}: i\rightarrow i+1\ |\ i\in\mathbb{Z}/r\mathbb{Z}\}$.

A finite dimensional representation $(\bm{V},\bm{x})$ of $\bm{Q}$ with dimension vector $\bm{d}=(d_{0},\dots,d_{r-1})$ is a collection $(\bm{V}=\oplus_{i\in \mathbb{Z}/r\mathbb{Z}}\ V_{i}, \bm{x}=\oplus_{i\in \mathbb{Z}/r\mathbb{Z}}\ x_{i})$ of vector spaces over $\bm{k}$ and linear maps with $\text{dim}V_{i}=d_{i}$ and $x_{i}\in \text{Hom}_{\bm{k}}(V_{i},V_{i+1})$.
We will thus define the quiver representation space with dimension vector $\bm{d}$ as
\[\text{Rep}(\bm{Q},\bm{d}):=\bigoplus_{i\in \mathbb{Z}/r\mathbb{Z}}\text{Hom}_{\bm{k}}(\bm{k}^{d_{i}},\bm{k}^{d_{i+1}})\]
There is a natural $G_{\bm{d}}=\prod_{i\in \mathbb{Z}/r\mathbb{Z}}GL_{d_{i}}(\bm{k})$ action on $\text{Rep}(\bm{Q},\bm{d})$ by base changes, namely the action is given by
\[(g_{0},g_{1},\dots,g_{r-1})\cdot(x_{0},x_{1},\dots,x_{r-1})\mapsto(g_{0}x_{0}g_{1}^{-1},g_{1}x_{1}g_{2}^{-1},\dots,g_{r-1}x_{r-1}g_{0}^{-1})\]
Hence the $G_{\bm{d}}$-orbits of $\text{Rep}(\bm{Q},\bm{d})$  are in one-to-one correspondence with the isomorphism classes of quiver representations.

A subrepresentation $(\bm{U},\bm{y})$ of $(\bm{V},\bm{x})$ is a collection of graded subspaces $\bm{U}=\oplus_{i\in\mathbb{Z}/r\mathbb{Z}}\ U_{i}$ of $\bm{V}$ with $U_{i}\subset V_{i}$ and $y_{i}=x_{i}|_{U_{i}}$ for each $i\in \mathbb{Z}/r\mathbb{Z}$.
A filtration $\mathcal{F}^{\bullet}(\bm{V},\bm{x})$ by subrepresentations $0=(\bm{U}^{0},\bm{y}^{0})\subset\cdots\subset(\bm{U}^{m},\bm{y}^{m})=(\bm{V},\bm{x})$ is a filtration of $\bm{V}$ by graded subspaces $\mathcal{F}^{\bullet}\bm{V}=\{\bm{U}^{j}\}_{j=0,1,\dots,m}$ with $\bm{x}$ stabilizing each $\bm{U}^{j}$.
Let $\bm{d}^{j}$ be the dimension vector of $\bm{U}^{j}$. We say the above filtration $\mathcal{F}^{\bullet}\bm{V}$ is of type $\bm{D}=(\bm{d}^{0},\dots,\bm{d}^{m})\in \mathbb{N}^{rm}$ with $\bm{d}^{0}=0$, $\bm{d}^{m}=\bm{d}$ and $(d_{i}^{0},\dots,d_{i}^{m})$ an increasing sequence for each $i\in\mathbb{Z}/r\mathbb{Z}$.
We denote by $\mathcal{F}^{\bullet}_{\bm{d},\bm{D}}$ the space of all filtrations of graded spaces of type $\bm{d}$ with type $\bm{D}$.
It is rather important to study the interplay of the flags of representation space and the quiver representation acting on it.
The following definition is due to Lusztig\cite{Lus91} as
a generalization of the Grothendieck-Springer resolution $\phi: \tilde{\mathfrak{g}}\rightarrow \mathfrak{g}$ for type $A$\cite{CG10}:
\begin{Def}
A resolution of quiver representation space $\text{Rep}(\bm{Q},\bm{d})$ associated to a filtration of type $\bm{D}$, denoted by $\widetilde{\text{Rep}}(\bm{Q},\bm{d},\bm{D})$, is the variety
\[\{(\bm{x},\bm{y}) \in \text{Rep}(\bm{Q},\bm{d})\times \mathcal{F}^{\bullet}_{\bm{d},\bm{D}}\ |\ \bm{y}=\{0=\bm{U}^{0}\subset\dots\subset\bm{U}^{m}\},\bm{x}(\bm{U}^{j})\subset \bm{U}^{j}\
\operatorname{for\ any}\  j=0,1,\dots,m\}.\]
The first and second projection give rise to a diagram
\[\xymatrix{
\  & \widetilde{\text{Rep}}(\bm{Q},\bm{d},\bm{D})\ar[dl]_{p_{1}}\ar[dr]^{p_{2}} & \ \\
\text{Rep}(\bm{Q},\bm{d})  & \  & \ \ \ \ \mathcal{F}^{\bullet}_{\bm{d},\bm{D}}\ \ \ \ \ \
}\]
where for $\bm{x}\in \text{Rep}(\bm{Q},\bm{d})$, $p_{1}^{-1}(\bm{x})$ is the space of filtrations
stabilized by $\bm{x}$; for $\bm{y}\in \mathcal{F}_{\bm{d},\bm{D}}^{\bullet}$, $p_{2}^{-1}(\bm{y})$ is the space of quiver representations that stabilize $\bm{y}$.
\end{Def}

From now on we fix the dimension vector $\bm{d}_{0}=(N,\dots,N)$, i.e., each $V_{i}=\bm{k}^{N}$.
If we choose $\bm{D}$ to be $((0,\dots,0),(1,\dots,1),(2,\dots,2),\dots,(N,\dots,N))$, then for each $i\in \mathbb{Z}/r\mathbb{Z}$ we have a complete flag $0=U_{i}^{0}\subset U_{i}^{1}\subset\dots \subset U_{i}^{N}=V_{i}$ of $\bm{k}^{N}$.
In this case, $\mathcal{F}_{\bm{d}_{0},\bm{D}}^{\bullet}$ is isomorphic to $\mathcal{B}_{N}^{r}$,
i.e., the $r$-th Cartesian power of the flag variety $\mathcal{B}_{N}$ of $GL_{N}(\bm{k})$.
We say that this filtration is of standard type $\bm{D}_{0}$, hence $\mathcal{F}_{\bm{d}_{0},\bm{D}_{0}}^{\bullet}=\mathcal{B}_{N}^{r}$.
By fixing a basis $\{\upsilon_{i}^{1},\dots,\upsilon_{i}^{N}\}$ for each $V_{i}$, $\text{Rep}(\bm{Q},\bm{d})$ is identified with the direct sum of $r$ copies of the space of $N\times N$ matrices $M_{N}$.
We identify $M_{N}$ with $\mathfrak{g}$, the lie algebra of $GL_{N}(\bm{k})$.
Let $\bm{y}_{0}=\{0=\bm{U}^{0}\subset\dots\subset\bm{U}^{N}=\bm{V}\}$ be the standard flag with $U_{i}^{j}=\text{Span}\{\upsilon_{i}^{1},\dots,\upsilon_{i}^{j}\}$ for any $0\leqslant j\leqslant N$.
A quiver representation $\bm{x}\in \text{Rep}(\bm{Q},\bm{d})$ stabilizes $\bm{y}_{0}$ if and only if $x_{i}\in \mathfrak{b}$, that is, $x_i$ is
an upper triangular matrix for each $i\in \mathbb{Z}/r\mathbb{Z}$. The group
$G_{\bm{d}_{0}}=\prod_{i\in \mathbb{Z}/r\mathbb{Z}}GL_{N}(\bm{k})$ acts on $\widetilde{\text{Rep}}(\bm{Q},\bm{d}_{0},\bm{D}_{0})$ by base changes.
The following realization of $\widetilde{\text{Rep}}(\bm{Q},\bm{d}_{0},\bm{D}_{0})$
is similar to~\cite[Corollary~3.1.33]{CG10}.
We denote $\bm{\mathfrak{b}_{r}}:=\mathfrak{b}^{\prime}_{0}\oplus\cdots\oplus\mathfrak{b}^{\prime}_{r-1}$ ($r$ summands), and $\bm{\mathfrak{g}_{r}}:=\mathfrak{g}^{\prime}_{0}\oplus\cdots\oplus\mathfrak{g}_{r-1}^{\prime}$ ($r$ summands).
Here, we add a prime to remind that they live in $\text{Hom}_{\bm{k}}(V_{i},V_{i+1})$ rather than $\text{Hom}_{\bm{k}}(V_{i},V_{i})$.

\begin{Prop}\label{bundle}
The projection $p_{2}:\widetilde{\text{Rep}}(\bm{Q},\bm{d}_{0},\bm{D}_{0})\rightarrow \mathcal{F}^{\bullet}_{\bm{d}_{0},\bm{D}_{0}}=\mathcal{B}_{N}^{r}$ makes $\widetilde{\text{Rep}}(\bm{Q},\bm{d}_{0},\bm{D}_{0})$ a $GL_{N}^{r}$-equivariant vector bundle over $\mathcal{B}_{N}^{r}$ with fiber $\bm{\mathfrak{b}_{r}}$.
For $\bm{g}\in GL_{N}^{r}$, the assignment
\[(g_{0},\dots,g_{r-1},x_{0},\dots,x_{r-1})\mapsto (g_{0}x_{0}g_{1}^{-1},\dots,g_{r-1}x_{r-1}g_{0}^{-1},\bm{g}\cdot\bm{y}_{0})\]
gives a $GL_{N}^{r}$-equivariant isomorphism $GL_{N}^{r}\times^{B_{N}^{r}}\bm{\mathfrak{b}_{r}}\simeq\widetilde{\text{Rep}}(\bm{Q},\bm{d}_{0},\bm{D}_{0})$.
\end{Prop}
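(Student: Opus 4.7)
The plan is to mimic the standard argument that realizes the Grothendieck--Springer resolution as $G \times^{B}\mathfrak{b}$, generalized here to the cyclic-quiver/$r$-tuple setting. I would proceed in three stages: identify the fiber of $p_2$ at the base point, exhibit a $GL_N^r$-action that is fiberwise compatible with this identification, and finally package this as an isomorphism with the associated bundle $GL_N^r \times^{B_N^r} \bm{\mathfrak{b}_r}$.

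First, I would compute $p_2^{-1}(\bm{y}_0)$. By definition of $\widetilde{\text{Rep}}(\bm{Q},\bm{d}_0,\bm{D}_0)$, this is the set of $\bm{x} \in \text{Rep}(\bm{Q},\bm{d}_0)$ such that $x_i(U_i^j)\subset U_{i+1}^j$ for all $i \in \mathbb{Z}/r\mathbb{Z}$ and all $j$. Writing each $x_i$ in the chosen bases $\{\upsilon_i^1,\dots,\upsilon_i^N\}$, this is exactly the condition that $x_i$ be upper triangular as a linear map $V_i \to V_{i+1}$, as already recorded in the paragraph preceding the proposition. Thus $p_2^{-1}(\bm{y}_0)\cong \bm{\mathfrak{b}_r}$. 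For an arbitrary flag $\bm{y}=\bm{g}\cdot\bm{y}_0$ with $\bm{g}\in GL_N^r$, the fiber is obtained from $\bm{\mathfrak{b}_r}$ by conjugating each summand by the pair $(g_i,g_{i+1})$, which shows fibers of $p_2$ are vector spaces of constant dimension $\dim \bm{\mathfrak{b}_r}$.

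Next, I would define the morphism
\[\Phi:GL_N^r \times \bm{\mathfrak{b}_r}\longrightarrow \widetilde{\text{Rep}}(\bm{Q},\bm{d}_0,\bm{D}_0),\qquad (\bm{g},\bm{x})\mapsto \bigl((g_0 x_0 g_1^{-1},\dots,g_{r-1}x_{r-1}g_0^{-1}),\bm{g}\cdot\bm{y}_0\bigr),\]
first checking its image lies in $\widetilde{\text{Rep}}(\bm{Q},\bm{d}_0,\bm{D}_0)$: since $x_i(U_i^j)\subset U_{i+1}^j$ and the $g_i$ carry $\bm{y}_0$ to $\bm{g}\cdot\bm{y}_0$, the transformed quiver representation stabilizes the transformed flag. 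Then I would verify $B_N^r$-equivariance: for $\bm{b}\in B_N^r$, replacing $(\bm{g},\bm{x})$ by $(\bm{g}\bm{b},\bm{b}^{-1}\cdot\bm{x})$ gives $g_i b_i (b_i^{-1} x_i b_{i+1}) b_{i+1}^{-1} g_{i+1}^{-1}= g_i x_i g_{i+1}^{-1}$ on each component, and $\bm{g}\bm{b}\cdot \bm{y}_0 = \bm{g}\cdot \bm{y}_0$ since $B_N^r$ stabilizes $\bm{y}_0$. Hence $\Phi$ descends to a $GL_N^r$-equivariant morphism $\overline{\Phi}: GL_N^r\times^{B_N^r}\bm{\mathfrak{b}_r}\to \widetilde{\text{Rep}}(\bm{Q},\bm{d}_0,\bm{D}_0)$ commuting with the two projections to $\mathcal{B}_N^r$.

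Finally, I would show $\overline{\Phi}$ is an isomorphism. Surjectivity follows because $GL_N^r$ acts transitively on $\mathcal{B}_N^r$, so every fiber of $p_2$ is hit by moving the base-point fiber $\bm{\mathfrak{b}_r}$ via some $\bm{g}\in GL_N^r$. Injectivity: if $[\bm{g},\bm{x}]$ and $[\bm{g}',\bm{x}']$ have the same image, then $\bm{g}\cdot \bm{y}_0=\bm{g}'\cdot\bm{y}_0$, so $\bm{b}:=\bm{g}^{-1}\bm{g}'\in B_N^r$, and the quiver-representation equation forces $\bm{x}'=\bm{b}^{-1}\cdot \bm{x}$, which is precisely the equivalence relation on the associated bundle. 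Both sides are smooth varieties fibered over $\mathcal{B}_N^r$ with fibers identified linearly by $\overline{\Phi}$, so $\overline{\Phi}$ is a bijective morphism between smooth varieties that is fiberwise a linear isomorphism; this suffices to conclude it is an isomorphism of varieties, and the vector bundle structure on the right-hand side is then inherited from the associated-bundle structure on the left. The main (but mild) obstacle is the book-keeping in the $B_N^r$-equivariance check, since the action twists adjacent indices $i,i+1\pmod r$, which has to be tracked carefully to see that the cyclic structure is respected.
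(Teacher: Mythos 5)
Your proof is correct, and it is the standard Chriss--Ginzburg argument that the paper itself silently appeals to (via the reference to~\cite[Corollary~3.1.33]{CG10}); the paper provides no proof beyond that citation. The only point worth flagging is that in the last step, rather than invoking smoothness and bijectivity, it is cleaner to observe directly that $\overline{\Phi}$ is a morphism over $\mathcal{B}_N^r$ which is a linear isomorphism on each fiber, hence an isomorphism of vector bundles, with local triviality of $\widetilde{\text{Rep}}(\bm{Q},\bm{d}_0,\bm{D}_0)$ then inherited from the associated bundle on the source.
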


We denote by $\mathfrak{T}_{r}^{*}\mathcal{B}_{N}^{r}$ the vector bundle $GL_{N}^{r}\times^{B_{N}^{r}}\bm{\mathfrak{b}_{r}}$ over the flag variety $\mathcal{B}_{N}^{r}$.
From this point of view, it is evident that $\widetilde{\text{Rep}}(\bm{Q},\bm{d}_{0},\bm{D}_{0})$ is smooth.
For this particular case, we shall denote the first projection by $\psi_{r}$, the second projection by $\pi_{r}$.
Under the above identification, $\psi_{r}:\mathfrak{T}_{r}^{*}\mathcal{B}_{N}^{r}\rightarrow \bm{\mathfrak{g}_{r}}$ is given by
\[(g_{0},\dots,g_{r-1},x_{0},\dots,x_{r-1})\mapsto (g_{0}x_{0}g_{1}^{-1},\dots,g_{r-1}x_{r-1}g_{0}^{-1}).\]
It is called {\em collapsing} following Kempf. Clearly it is proper and surjective. Moreover, we have

\begin{Lem}\label{relative dimension}
$\psi_{r}:\mathfrak{T}_{r}^{*}\mathcal{B}_{N}^{r}\rightarrow \bm{\mathfrak{g}_{r}}$ is of relative dimension zero, i.e., it is generically finite.
\end{Lem}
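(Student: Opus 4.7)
The plan is to prove this by a dimension count followed by exhibiting a concrete open locus in $\bm{\mathfrak{g}_r}$ over which the fibers of $\psi_r$ are finite.

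First I would verify that $\dim \mathfrak{T}_r^*\mathcal{B}_N^r = \dim \bm{\mathfrak{g}_r}$. By Proposition \ref{bundle}, $\mathfrak{T}_r^*\mathcal{B}_N^r$ is a vector bundle of rank $\dim \bm{\mathfrak{b}_r} = r \cdot \frac{N(N+1)}{2}$ over $\mathcal{B}_N^r$, so
\[
\dim \mathfrak{T}_r^*\mathcal{B}_N^r = r\binom{N}{2} + r\cdot\frac{N(N+1)}{2} = rN^2 = \dim \bm{\mathfrak{g}_r}.
\]
Since $\psi_r$ is proper and surjective between irreducible varieties of equal dimension, generic finiteness is equivalent to the existence of even one point in $\bm{\mathfrak{g}_r}$ whose fiber is finite (by upper semicontinuity of fiber dimension).

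Next I would exhibit the open locus. Consider the Zariski open subset $U \subset \bm{\mathfrak{g}_r}$ consisting of tuples $(a_0,\dots,a_{r-1})$ such that each $a_i \in \mathrm{Hom}_{\bmk}(V_i,V_{i+1})$ is invertible and the composition $a_{r-1}a_{r-2}\cdots a_0 \in \mathrm{End}(V_0)$ is regular semisimple. This open set is nonempty: one can pick each $a_i$ to be the identity matrix (under the chosen bases) composed with a generic rescaling so that the product has $N$ distinct eigenvalues.

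Then I would compute the fiber. Unpacking the definition of $\widetilde{\mathrm{Rep}}(\bm{Q},\bm{d}_0,\bm{D}_0)$, a point of $\psi_r^{-1}(a_0,\dots,a_{r-1})$ is a tuple of complete flags $(U_i^\bullet)_{i\in\mathbb{Z}/r\mathbb{Z}}$ satisfying $a_i(U_i^j)\subseteq U_{i+1}^j$ for all $i,j$. When the $a_i$ are invertible these inclusions are forced to be equalities, so $U_{i+1}^j = a_i(U_i^j)$ for every $i,j$; the full tuple of flags is determined by the flag $U_0^\bullet$ on $V_0$ alone. Moreover $U_0^\bullet$ must be stabilized by the composition $a_{r-1}\cdots a_0$, since chasing around the cycle yields $U_0^j = a_{r-1}\cdots a_0(U_0^j)$. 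For the regular semisimple element $a_{r-1}\cdots a_0$, the classical Grothendieck-Springer argument gives exactly $|W|=N!$ such flags. Hence $|\psi_r^{-1}(a_0,\dots,a_{r-1})| = N! <\infty$.

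There is no serious obstacle here; the only mild subtlety is recognizing the correct open locus (invertibility of all $a_i$ plus regular semisimplicity of the product), since the individual $a_i$ are not endomorphisms and so ``regular semisimple'' has to be imposed on their cyclic composition rather than on each factor. Once that locus is fixed, the proof reduces to the standard $r=1$ Grothendieck-Springer computation applied to $a_{r-1}\cdots a_0$ on $V_0$.
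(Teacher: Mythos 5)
Your proof is correct and takes essentially the same approach as the paper: both identify a dense open locus in $\bm{\mathfrak{g}_r}$ where the cyclic composition of the $a_i$ is regular semisimple, and use the fact that a regular semisimple endomorphism preserves only finitely many complete flags. The paper restricts only to tuples whose cyclic product is regular semisimple (without requiring individual invertibility of the $a_i$) and settles for the cruder upper bound $(N!)^r$ over that locus, whereas you impose the additional open condition that every $a_i$ be invertible, which forces $U_{i+1}^j = a_i(U_i^j)$ and yields the exact fiber count $N!$; your preliminary dimension count is also correct but not strictly needed once a finite fiber over a dense open set is exhibited.
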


\begin{proof}
It suffices to prove that there is a dense open subset of $\bm{\mathfrak{g}_{r}}$ such that for any element of this set, its inverse image is a finite set.
Set
\[\bm{\mathfrak{g}_{r}}^{sr}:=\{(x_{0},\dots,x_{r-1})\in \bm{\mathfrak{g}_{r}}\ |\ x_{1}\cdot x_{2}\cdots x_{0}\  \text{is a semisimple regular matrix}\}\]
Similarly to~\cite[Lemma~3.1.5]{CG10}, $\bm{\mathfrak{g}_{r}}^{sr}$ is a dense open subset of $\bm{\mathfrak{g}_{r}}$.

Choose any $\bm{x}=(x_{0},\dots,x_{r-1})\in \bm{\mathfrak{g}_{r}}^{sr}$.
For any $i\in \mathbb{Z}/r\mathbb{Z}$, $f_{i}=x_{i+1}\cdot x_{i+2}\cdots x_{i}: V_{i}\rightarrow V_{i}$ is a linear transformation of $V_{i}$, and the condition says $f_{0}$ is semisimple and regular.
Since for two $N\times N$ matrices $A$ and $B$, $A\cdot B$ and $B\cdot A$ have the same eigenvalues, we see that $f_{i}$ is also semisimple and regular for any $i\in\mathbb{Z}/r\mathbb{Z}$.
Given $\bm{y}\in \psi_{r}^{-1}(\bm{x})$, we see that $f_{i}$ stablizes the filtration of $V_{i}$ for each $i\in\mathbb{Z}/r\mathbb{Z}$.
Hence the number of filtrations of $\bm{V}$ that are stablized by $\bm{x}$ is not bigger than $(n!)^{r}$.
\end{proof}

Now, Finkelberg-Ionov's bundle $\mathcal{T}_{r}^{*}\mathcal{B}_{N}^{r}$ is constructed as follows.
Let $\bm{D}_{1}=(\bm{d}^{0},\bm{d}^{1},\dots,\bm{d}^{2N})$ be the type of filtration with $\bm{d}^{2k}_{\ell}=k$ if $1\leqslant k\leqslant N$ and $0\leqslant \ell\leqslant r-1$; $\bm{d}_{\ell}^{2k-1}=k-1$ if $1\leqslant k\leqslant N$ and $0\leqslant \ell\leqslant r-2$; and $\bm{d}^{2k-1}_{r-1}=k$ for $1\leqslant k\leqslant N$.
We denote by  $\mathcal{T}_{r}^{*}\mathcal{B}_{N}^{r}$ the resolution variety $\widetilde{\text{Rep}}(\bm{Q},\bm{d_{0}},\bm{D}_{1})$. In this case  $\mathcal{F}^{\bullet}_{\bm{d}_{0},\bm{D}_{1}}$ is also the flag variety $\mathcal{B}_{N}^{r}$.
This construction is actually nothing but the realization of Lusztig's convolution diagram given by \cite{FI}.
Let $\mathfrak{n}^{\prime}_{i}$ be the space of strictly upper triangular matrices in $\mathfrak{g}_{i}^{\prime}$ ($i\in\mathbb{Z}/r\mathbb{Z}$).
We denote $\bm{\mathfrak{n}_{r}}:=\mathfrak{b}_{0}^{\prime}\oplus\cdots\oplus\mathfrak{b}^{\prime}_{r-2}\oplus\mathfrak{n}^{\prime}_{r-1}$ ($r$ summands).
Similarly to Proposition~\ref{bundle}, $\mathcal{T}_{r}^{*}\mathcal{B}_{N}^{r}\simeq GL_{N}^{r}\times^{B_{N}^{r}}\bm{\mathfrak{n}_{r}}$, which is a vector subbundle of $\mathfrak{T}_{r}^{*}\mathcal{B}_{N}^{r}$ over $\mathcal{B}_{N}^{r}$.

We denote by $\varphi_{r}: \mathcal{T}_{r}^{*}\mathcal{B}_{N}^{r}\rightarrow \bm{\mathfrak{g}_{r}}$ the collapsing.
Its image $\mathcal{N}_{r}$ is a closed $GL_{N}^{r}(\bm{k})$-stable subvariety of $\bm{\mathfrak{g}_{r}}$.
As a generalization of Springer resolution, we have the following result due to
G.~Lusztig, see~\cite[Lemma~1.6]{Lus91}.

\begin{Prop}\label{birational}
$\varphi_{r}: \mathcal{T}_{r}^{*}\mathcal{B}_{N}^{r}\rightarrow \mathcal{N}_{r}$ is a resolution of singularities of $\mathcal{N}_{r}$, i.e., it is a proper surjective birational morphism.
In particular, it is of relative dimension zero.
\end{Prop}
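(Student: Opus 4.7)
The plan is to verify the three defining properties of a resolution of singularities in turn: properness, surjectivity, and birationality, since smoothness of the source is already clear from the vector bundle realization $\mathcal{T}_{r}^{*}\mathcal{B}_{N}^{r}\simeq GL_{N}^{r}\times^{B_{N}^{r}}\bm{\mathfrak{n}_{r}}$. For properness, I would use the standard incidence-variety trick: realize $\mathcal{T}_{r}^{*}\mathcal{B}_{N}^{r}$ as the closed subvariety of $\mathcal{B}_{N}^{r}\times \bm{\mathfrak{g}_{r}}$ consisting of pairs $(\bm{\mathcal{F}},\bm{x})$ with $\bm{x}$ stabilizing $\bm{\mathcal{F}}$ in the sense of type $\bm{D}_{1}$, and factor $\varphi_{r}$ as this closed embedding followed by the second projection, which is proper because $\mathcal{B}_{N}^{r}$ is projective. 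Surjectivity onto $\mathcal{N}_{r}$ is then tautological (the image is closed by properness, and we defined $\mathcal{N}_{r}$ to be this image).

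The substantive step is birationality, for which I would exhibit an explicit dense open $U\subset\mathcal{N}_{r}$ over which $\varphi_{r}$ has fibers consisting of a single point. The natural candidate is the cyclic analog of the regular nilpotent locus:
\[U:=\{\bm{x}\in\mathcal{N}_{r}\mid x_{0},\dots,x_{r-2}\text{ are invertible and }f_{0}:=x_{r-1}x_{r-2}\cdots x_{0}\text{ is regular nilpotent on }V_{0}\}.\]
This set is nonempty (take $x_{0}=\cdots=x_{r-2}=\mathrm{Id}$ and $x_{r-1}$ a principal nilpotent) and is cut out in $\mathcal{N}_{r}$ by open conditions in $\bm{\mathfrak{g}_{r}}$, hence is open. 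Irreducibility of $\mathcal{T}_{r}^{*}\mathcal{B}_{N}^{r}$ as a vector bundle over the connected flag variety transfers to $\mathcal{N}_{r}$, so $U$ is dense.

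Given $\bm{x}\in U$ and any tuple of complete flags $\{F_{i}^{\bullet}\}$ stabilizing $\bm{x}$ in the sense of $\bm{D}_{1}$ (i.e. $x_{i}(F_{i}^{k})\subset F_{i+1}^{k}$ for all $i,k$ and additionally $x_{r-1}(F_{r-1}^{k})\subset F_{0}^{k-1}$), the invertibility of $x_{0},\dots,x_{r-2}$ combined with a dimension count forces $F_{i+1}^{k}=x_{i}(F_{i}^{k})$ and hence $F_{r-1}^{k}=(x_{r-2}\cdots x_{0})(F_{0}^{k})$. The surviving condition becomes $f_{0}(F_{0}^{k})\subset F_{0}^{k-1}$, i.e.\ $F_{0}^{k}\subset\ker f_{0}^{k}$; since $f_{0}$ is regular nilpotent, $\dim\ker f_{0}^{k}=k=\dim F_{0}^{k}$, so $F_{0}^{k}=\ker f_{0}^{k}$ is uniquely determined. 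This gives a set-theoretic inverse $\bm{x}\mapsto\bigl(F_{0}^{k}=\ker f_{0}^{k},\; F_{i+1}^{k}=x_{i}F_{i}^{k}\bigr)$, and since the ranks of $f_{0}^{k}$ are constant on $U$, the kernel filtration varies regularly in $\bm{x}$, making this inverse a morphism. Thus $\varphi_{r}^{-1}(U)\to U$ is an isomorphism of varieties, which gives birationality and (since generic fibers are reduced points) the relative dimension zero assertion.

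The main obstacle is the uniqueness of the stabilizing flag tuple over $U$, but this reduces to the classical fact that a regular nilpotent endomorphism of $V_{0}$ stabilizes a unique complete flag, namely its kernel filtration; everything else is bookkeeping from the shape of $\bm{D}_{1}$ and the invertibility of the remaining arrows. Smoothness of $\mathcal{T}_{r}^{*}\mathcal{B}_{N}^{r}$ guarantees that the birational morphism $\varphi_{r}$ is genuinely a resolution of $\mathcal{N}_{r}$.
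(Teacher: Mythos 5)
Your proof is correct and reaches the conclusion by a genuinely different choice of generic locus. The paper works with $\mathcal{N}_{r}^{\text{reg}}$, the set where \emph{every} cyclic composition $f_{i}=x_{i-1}\cdots x_{i}$ is a regular nilpotent endomorphism of $V_{i}$, and appeals to uniqueness of the stabilizing complete flag for each $f_{i}$ independently (so each of the $r$ flags in a fiber is pinned down separately). You instead restrict to the locus where $x_{0},\dots,x_{r-2}$ are invertible and only $f_{0}$ is regular nilpotent; this lets you determine $F_{0}^{\bullet}$ from the kernel filtration of $f_{0}$ and then \emph{transport} the remaining flags around the cycle via the invertible arrows. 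Your version buys an explicit inverse morphism over the open set, which makes birationality (rather than mere one-pointedness of fibers) immediate and, as a bonus, is characteristic-free. One small imprecision: the condition ``$f_{0}$ regular nilpotent'' is \emph{not} open in $\bm{\mathfrak{g}_{r}}$ (nilpotency is closed), so the claim that $U$ is ``cut out in $\mathcal{N}_{r}$ by open conditions in $\bm{\mathfrak{g}_{r}}$'' needs the preliminary observation, which the paper's proof records, that $f_{0}$ is automatically nilpotent on all of $\mathcal{N}_{r}$; with that in hand, ``regular nilpotent'' agrees on $\mathcal{N}_{r}$ with the open condition ``regular,'' and openness of $U$ follows.
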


\begin{proof}
  For $\bm{x}=(x_{0},\dots,x_{r-1})\in \mathcal{N}_{r}$, we still denote by $f_{i}$
  the composition $x_{i+1}\cdot x_{i+2}\cdots\cdot x_{i}: V_{i}\rightarrow V_{i}$ ($i\in \mathbb{Z}/r\mathbb{Z}$).
In this case, for any $\bm{y}\in\varphi^{-1}(\bm{x})$, for the filtration $0=U_{i}^{0}\subset U_{i}^{1}\subset\cdots \subset U_{i}^{N}=V_{i}$ of $V_{i}$, we have $f_{i}(U_{i}^{k})\subset U_{i}^{k-1}$ for $1\leqslant k\leqslant N$.
Thus each $f_{i}$ is a nilpotent matrix.
We set
\[\mathcal{N}_{r}^{\text{reg}}:=\{(x_{0},\dots,x_{r-1})\in\mathcal{N}_{r}\ |\ f_{i}  \ \text{is regular nilpotent for any }i\in\mathbb{Z}/r\mathbb{Z}\}\]
Since the non-regularity property is defined by vanishing of certain polynomials,  $\mathcal{N}_{r}^{\text{reg}}$ is a dense open subset of $\mathcal{N}_{r}$.
By~\cite[Proposition~3.2.13]{CG10}, there is one and only one flag that is fixed by a regular nilpotent matrix.
Thus for any $\bm{x}\in\mathcal{N}_{r}^{\text{reg}}$, the fiber $\varphi_{r}^{-1}(\bm{x})$ consists of exactly one element.
\end{proof}

\section{Higher Cohomology Vanishing}
In this section, we prove our main result, i.e., the higher cohomology vanishing of line bundles over $\mathcal{T}_{r}^{*}\mathcal{B}_{N}^{r}$ in the characteristic $0$ case.
The same proof also applies to $\mathfrak{T}_{r}^{*}\mathcal{B}_{N}^{r}$.

We start by recalling some facts about $GL_{N}(\bm{k})$ and fixing some notations.
Let $G=GL_{N}(\bm{k})$, and let $T\subset G$ be the subgroup of diagonal matrices. Denote by $\epsilon_{j}\in X(T)$ the character of $T$ such that $\epsilon_{j}(t_{1},\dots,t_{N})=t_{j}$ for each $1\leqslant j\leqslant N$.
Then $\{\epsilon_{j}\}_{1\leqslant j\leqslant N}$ is a standard basis for $X(T)$.
In this way, $X(T)\simeq\mathbb{Z}^{N}$.
Let $B\subset G$ be the Borel subgroup of upper triangular matrices that corresponds to the positive roots $\{\epsilon_{j}-\epsilon_{k}\}_{1\leqslant j< k\leqslant N}$. Let $\mathcal{B}=G/B$ be the flag variety.
The set of dominant weights is $X^{+}(T)\simeq \{\lambda=(\lambda_{1}\geqslant \lambda_{2}\geqslant\dots\geqslant \lambda_{N})\in \mathbb{Z}^{N}\}$, i.e., the set $\mathcal{P}_{N}$ of generalized partitions of length $N$.
For $G=GL_{N}^{r}$, the weight lattice is $X(T^{r})\simeq \mathbb{Z}^{rN}$. The standard basis is $\{\epsilon_{i,j}\}_{0\leqslant i\leqslant r-1,1\leqslant j\leqslant N}$ ordered as $\{\epsilon_{0,1},\epsilon_{0,2},\dots,\epsilon_{0,N},\epsilon_{1,1},\dots,\epsilon_{r-1,N}\}$.
The set of dominant weights is $X^{+}(T^{r})\simeq\{\bm{\lambda}=(\lambda^{0},\cdots,\lambda^{r-1})\in \mathbb{Z}^{rN}|\ \lambda^{i}\in \mathcal{P}_{N}\  \text{for any}\ i\in \mathbb{Z}/r\mathbb{Z}\}$, i.e., the set $\mathcal{P}_{N}^{r}$ of generalized $r$-multipartitions.
If $\lambda\in \mathcal{P}_{N}$, then $H^{0}(\mathcal{B},\mathcal{L}_{\mathcal{B}}(\bm{k}_{-\lambda}))$ is isomorphic to the irreducible representation of $G$ with lowest weight $-\lambda$, i.e., the dual of irreducible representation $V_{\lambda}$ of $G$ with highest weight $\lambda$.
For $\bm{\lambda}\in \mathcal{P}_{N}^{r}$, $H^{0}(\mathcal{B}_{N}^{r},\mathcal{L}_{\mathcal{B}_{N}^{r}}(\bm{k}_{\bm{-\lambda}}))\simeq V_{\bm{\lambda}}^{*}=V_{\lambda^{0}}^{*}\otimes\cdots\otimes V_{\lambda^{r-1}}^{*}$.




Similarly to~\cite[Theorem 2.2]{broer2}, we consider $U:=GL_{N}^{r}\times^{B_{N}^{r}}(\bm{\mathfrak{n}_{r}}\times\bm{k}_{\bm{\lambda}})$ for any dominant weight $\bm{\lambda}\in \mathcal{P}_{N}^{r}$.
It can be viewed as a line bundle over $\mathcal{T}_{r}^{*}\mathcal{B}_{N}^{r}$.
We have a natural map $GL_{N}^{r}\times^{B_{N}^{r}}\bm{k}_{\bm{\lambda}}\rightarrow V_{\bm{\lambda}}$ given by $(\Pi_{i=0}^{r-1}g_{i},c_{\bm{\lambda}})\mapsto \Pi_{i=0}^{r-1}g_{i}\cdot\upsilon_{\bm{\lambda}}$, where $\upsilon_{\bm{\lambda}}$ is the highest weight vector.
Let $\phi:U\rightarrow \mathcal{N}_{r}\times V_{\bm{\lambda}}$ be the morphism:
\[
\phi:(g_{0},\dots,g_{r-1},x_{0},\dots,x_{r-1},c_{\bm{\lambda}})\mapsto (g_{0}x_{0}g_{1}^{-1},\dots,g_{r-1}x_{r-1}g_{0}^{-1},\Pi_{i=0}^{r-1}g_{i}\cdot\upsilon_{\bm{\lambda}})
\]
By Proposition \ref{birational}, $\phi:U\rightarrow\text{Im}(\phi)$ is proper and biraional.

The following lemma is similar to~\cite[Lemma 4.3]{FI}.

\begin{Lem}\label{canonical bundle}
Let $\bm{\kappa}:=\sum_{j=0}^{N}(\epsilon_{0,j}-\epsilon_{r-1,j})$.
We have $K(U)\simeq \mathcal{L}_{U}(\bm{k}_{-\bm{\lambda}-\bm{\kappa}})$, where we denote by $K(U)$ the canonical line bundle of $U$.
\end{Lem}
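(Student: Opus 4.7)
My plan is to exploit that $U$ is a $GL_N^r$-equivariant vector bundle over $\mathcal{B}_N^r$ (the total space of the associated bundle with $B_N^r$-module fiber $F:=\bm{\mathfrak{n}_r}\oplus\bm{k}_{\bm{\lambda}}$) and reduce the canonical bundle computation to a character calculation on the base. Writing $\tau:U\to\mathcal{B}_N^r$ for the projection, the relative tangent sequence $0\to\tau^*U\to T_U\to\tau^*T_{\mathcal{B}_N^r}\to 0$ (where the first $U$ denotes the underlying vector bundle) gives $K(U)=\tau^*\bigl(K(\mathcal{B}_N^r)\otimes(\det U)^{-1}\bigr)$. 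Since $\det U$ is the homogeneous line bundle associated to $\det F$ as a one-dimensional $B_N^r$-module, its weight is $\chi(\bm{\mathfrak{n}_r})+\bm{\lambda}$, where $\chi(V)$ denotes the sum of $T^r$-weights of a $T^r$-module $V$. Thus everything is reduced to computing two characters and subtracting.

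For the first, the tangent space of $\mathcal{B}_N^r$ at the base point is a sum of $r$ copies of $\mathfrak{g}/\mathfrak{b}\cong\mathfrak{n}^-$, whose weights are the negative roots of $GL_N$, yielding $K(\mathcal{B}_N^r)=\mathcal{L}(\bm{k}_{2\bm{\rho}})$ with $2\bm{\rho}=\sum_{i=0}^{r-1}\sum_{j=1}^N(N-2j+1)\epsilon_{i,j}$. For $\chi(\bm{\mathfrak{n}_r})$ I would enumerate matrix-unit weights using the twisted cyclic $B_N^r$-action: the $(j,k)$-entry of $\mathfrak{b}'_i$ (with $j\leqslant k$) has weight $\epsilon_{i,j}-\epsilon_{i+1\bmod r,k}$. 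Summing gives
\[
\chi(\bm{\mathfrak{n}_r})=\sum_{j}(N-2j+2)\epsilon_{0,j}+\sum_{i=1}^{r-2}\sum_{j}(N-2j+1)\epsilon_{i,j}+\sum_{j}(N-2j)\epsilon_{r-1,j},
\]
where the deviations from the "all $\mathfrak{b}'_i$" case appear only in the first and last $GL_N$-factor because only the $\mathfrak{n}'_{r-1}$ summand is strictly upper triangular and only it pairs left-index $r-1$ with right-index $0$.

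Subtracting entrywise, one sees $2\bm{\rho}-\chi(\bm{\mathfrak{n}_r})=-\sum_j\epsilon_{0,j}+\sum_j\epsilon_{r-1,j}=-\bm{\kappa}$; combined with the $-\bm{\lambda}$ contribution from the trivial bundle $\bm{k}_{\bm{\lambda}}$, this yields $K(U)\simeq\mathcal{L}_U(\bm{k}_{-\bm{\lambda}-\bm{\kappa}})$. A cleaner shortcut I might use instead: $\bm{\mathfrak{n}_r}$ differs from $\bm{\mathfrak{b}_r}$ only by deleting the diagonal of $\mathfrak{b}'_{r-1}$, and $\chi(\bm{\mathfrak{b}_r})=2\bm{\rho}$ by symmetry (each $GL_N$-factor acts on two $\mathfrak{b}'_i$'s exactly as in the classical Springer case). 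Since the deleted diagonal has weight $\sum_j(\epsilon_{r-1,j}-\epsilon_{0,j})=-\bm{\kappa}$, one reads off $\chi(\bm{\mathfrak{n}_r})=2\bm{\rho}+\bm{\kappa}$ directly.

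The only genuinely delicate point is keeping the signs and cyclic indexing straight in the weight tally for $\mathfrak{n}'_{r-1}$, since it simultaneously breaks the symmetry between the first and last $GL_N$-factor and is responsible for the entire $\bm{\kappa}$ correction that distinguishes this lemma from the classical Broer computation. Everything else is a bookkeeping exercise.
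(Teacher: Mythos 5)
Your proposal is correct and follows essentially the same strategy as the paper: use $GL_N^r$-equivariance to reduce $K(U)$ to a weight computation on the fiber $\bm{\mathfrak{n}_r}\oplus\bm{k}_{\bm{\lambda}}\oplus\bigoplus_i\mathfrak{g}_i/\mathfrak{b}_i$ at the base point, and observe the cancellation yielding $-\bm{\lambda}-\bm{\kappa}$. The paper computes $\wedge^{\rm top}$ of the cotangent space directly, while you factor through $K(U)=\tau^*\bigl(K(\mathcal{B}_N^r)\otimes(\det U)^{-1}\bigr)$ and subtract $\chi(\bm{\mathfrak{n}_r})+\bm{\lambda}$ from $2\bm{\rho}$; this is the same arithmetic, just grouped differently, and your shortcut via $\chi(\bm{\mathfrak{b}_r})=2\bm{\rho}$ minus the deleted diagonal of $\mathfrak{b}'_{r-1}$ is a clean way to see the $\bm{\kappa}$ correction.
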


\begin{proof}
Since $U$ is the total space of $GL_{N}^{r}$-equivariant vector bundle over $\mathcal{B}_{N}^{r}$, the canonical line bundle $K(U)$ is a pull back line bundle from $\mathcal{B}_{N}^{r}$.
We need to determine its weight.
The top exterior power of the cotangent space of $U$ at $(e,z)$ ($e$ is the identity in $GL_{N}^{r}$, and $z\in \bm{\mathfrak{n}_{r}}\times \bm{k}_{\bm{\lambda}}$) is
\[\bigotimes_{i=0}^{r-1}\wedge^{\text{top}}(\mathfrak{g}_{i}/\mathfrak{b}_{i})^{*}\otimes\bigotimes_{i=0}^{r-1}\wedge^{\text{top}}(\mathfrak{n}^{\prime}_{i})^{*}\otimes\bm{k}_{-\bm{\lambda}-\bm{\kappa}}\]
For a one-dimensional representation $M$ of $T^{r}$, we shall denote $wt(M)$ its weight.
The weights of $(\mathfrak{g}_{i}/\mathfrak{b}_{i})^{*}$ are all positive roots, hence $wt(\otimes_{i=0}^{r-1}\wedge^{\text{top}}(\mathfrak{g}_{i}/\mathfrak{b}_{i})^{*})=\sum_{i=0}^{r-1}\sum_{1\leqslant j<k\leqslant N}(\epsilon_{i,j}-\epsilon_{i,k})$.
Let $E_{i,jk}$ ($1\leqslant j<k\leqslant N$) be the natural basis of $\mathfrak{n}^{\prime}_{i}$, then $wt(E_{i,jk})=\epsilon_{i,j}-\epsilon_{i+1,k}$.
Hence $wt(\otimes_{i=0}^{r-1}\wedge^{\text{top}}(\mathfrak{n}_{i}^{\prime})^{*})=\sum_{i=0}^{r-1}\sum_{1\leqslant j< k\leqslant N}(\epsilon_{i,k}-\epsilon_{i+1,j})=\sum_{i=0}^{r-1}\sum_{1\leqslant j<k\leqslant N}(\epsilon_{i,k}-\epsilon_{i,j})$.
Thus the first two terms cancel each other and leave $\bm{k}_{-\bm{\lambda}-\bm{\kappa}}$ alone.
\end{proof}

Now our main result follows by Broer-Brylinski paradigm.

\begin{Thm}
For any $\bm{\lambda}\in \mathcal{P}_{N}^{r}$ we have $H^{>0}(\mathcal{T}_{r}^{*}\mathcal{B}_{N}^{r},\mathcal{L}_{\mathcal{T}_{r}^{*}\mathcal{B}_{N}^{r}}(\bm{k}_{\bm{-\lambda}}))=0$.
\end{Thm}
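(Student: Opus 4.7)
The plan is to carry out the Broer--Brylinski argument, using the auxiliary total space $U$ from Lemma~\ref{canonical bundle}, but twisted so that its canonical bundle is exactly the line bundle whose vanishing we want. Given a dominant $\bm{\lambda}\in\mathcal{P}_N^r$, the first observation is that $\bm{\lambda}-\bm{\kappa}$ is still dominant: because $\bm{\kappa}$ shifts every coordinate in the $0$-th block by $+1$ and every coordinate in the $(r-1)$-th block by $-1$ (and does nothing in the other blocks), it preserves the weakly decreasing condition inside each block. I will therefore apply Lemma~\ref{canonical bundle} with parameter $\bm{\lambda}-\bm{\kappa}$ in place of $\bm{\lambda}$, setting
\[
U \;:=\; GL_N^r \times^{B_N^r}\bigl(\bm{\mathfrak{n}_r}\times\bm{k}_{\bm{\lambda}-\bm{\kappa}}\bigr),
\]
so that the lemma yields $K(U)\simeq\mathcal{L}_U(\bm{k}_{-\bm{\lambda}})$.

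The second step is to apply Kempf's form of the Grauert--Riemenschneider vanishing theorem to the collapsing $\phi\colon U\to \mathrm{Im}(\phi)\subset \mathcal{N}_r\times V_{\bm{\lambda}-\bm{\kappa}}$. By Proposition~\ref{birational} (together with the observation that the line bundle direction on source and the highest-weight line on target are matched by $\phi$ over the regular locus), this morphism is proper and birational, so in characteristic zero $R^{i>0}\phi_*K(U)=0$. Since $\mathrm{Im}(\phi)$ is a closed subvariety of the affine variety $\mathcal{N}_r\times V_{\bm{\lambda}-\bm{\kappa}}$, it is affine, and the Leray spectral sequence then gives
\[
H^{i}\bigl(U,\mathcal{L}_U(\bm{k}_{-\bm{\lambda}})\bigr)=H^{i}\bigl(U,K(U)\bigr)=0 \quad\text{for all }i>0.
\]

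The final step is to descend this vanishing from $U$ to $\mathcal{T}_r^*\mathcal{B}_N^r$ along the line bundle projection $p\colon U\to \mathcal{T}_r^*\mathcal{B}_N^r$. Because $p$ is affine, $R^{>0}p_*$ vanishes on coherent sheaves, and by the projection formula
\[
p_*\mathcal{L}_U(\bm{k}_{-\bm{\lambda}})\;\simeq\;\bigoplus_{j\geqslant 0}\mathcal{L}_{\mathcal{T}_r^*\mathcal{B}_N^r}\bigl(\bm{k}_{-\bm{\lambda}-j(\bm{\lambda}-\bm{\kappa})}\bigr).
\]
Leray then turns the vanishing of $H^{>0}(U,\mathcal{L}_U(\bm{k}_{-\bm{\lambda}}))$ into the vanishing of the higher cohomology of each summand; extracting the $j=0$ summand gives exactly $H^{>0}(\mathcal{T}_r^*\mathcal{B}_N^r,\mathcal{L}(\bm{k}_{-\bm{\lambda}}))=0$.

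The step I expect to require the most care is the input to Grauert--Riemenschneider, namely the properness and birationality of $\phi$: properness is immediate from the flag-variety construction, but the birationality requires combining Proposition~\ref{birational} with the fact that, over the regular locus $\mathcal{N}_r^{\mathrm{reg}}$, the image in the $V_{\bm{\lambda}-\bm{\kappa}}$-factor pins down the line in the total space. The choice of twist $\bm{\lambda}-\bm{\kappa}$ is the only genuinely non-routine move; once it is in place, Grauert--Riemenschneider plus affineness of the target and affineness of $p$ do the remaining work mechanically.
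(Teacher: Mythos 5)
Your proposal is correct and follows exactly the Broer--Brylinski route the paper indicates (Proposition~\ref{birational} plus Lemma~\ref{canonical bundle}, then Grauert--Riemenschneider over the affine image of the collapsing, then pushforward along the affine line-bundle projection and extracting the degree-$0$ graded piece). The only point you make explicit that the paper glosses over is that one must build $U$ with parameter $\bm{\lambda}-\bm{\kappa}$ rather than $\bm{\lambda}$, and that this shift preserves dominance; this is precisely the normalization needed so that $K(U)\simeq\mathcal{L}_U(\bm{k}_{-\bm{\lambda}})$ and the $j=0$ summand of $p_*$ yields the stated vanishing.
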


\begin{proof}
  By Proposition \ref{birational} and Lemma \ref{canonical bundle}, we can argue similarly
  to\cite[Theorem~2.2]{broer2} or we can apply~\cite[Theorem~3.1.(ii)]{Pan10}. The key step is
  to prove $H^{>0}(U,K(U))=0$.
\end{proof}

\begin{Rk}
{\em By Lemma \ref{relative dimension}, the same proof gives us the higher cohomology vanishing for the same line bundles over $\mathfrak{T}_{r}^{*}\mathcal{B}_{N}^{r}$.}
\end{Rk}

As a corollary, we have
\begin{Cor}
  For any $\bm{\lambda}\geqslant\bm{\mu}\in \mathcal{P}_{N}^{r}$, Finkelberg-Ionov's multivariable version of Kostka polynomials have nonnegative coefficients:
  $K_{\bm{\lambda},\bm{\mu}}(q_{1},\dots,q_{r})\in \mathbb{N}[q_{1},\dots,q_{r}]$.
  In particular, Shoji's negative version of Kostka polynomials have nonnegative coefficients:
  $K^{-}_{\bm{\lambda},\bm{\mu}}(q)\in \mathbb{N}[q]$ for any $\bm{\lambda}\geqslant\bm{\mu}\in\mathcal{P}_{N}^{r}$.

\end{Cor}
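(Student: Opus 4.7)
The plan is to combine the vanishing theorem just proved with Finkelberg--Ionov's coherent realization of $K_{\bm{\lambda},\bm{\mu}}(q_{1},\dots,q_{r})$ as the graded multiplicity of $V_{\bm{\lambda}}^{*}$ in the Euler characteristic of $\pi_{r}^{*}\mathcal{L}_{\mathcal{B}_{N}^{r}}(\bm{k}_{\bm{-\mu}})$. Once the higher cohomology vanishes, the Euler characteristic equals the honest character of $H^{0}$, which is a rational $GL_{N}^{r}$-module in characteristic zero and hence decomposes with nonnegative integer multiplicities into the irreducibles $V_{\bm{\lambda}}^{*}$; the Kostka polynomial is then a generating function of these multiplicities, so it lies in $\mathbb{N}[q_{1},\dots,q_{r}]$.

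More concretely, first I would recall the $\mathbb{N}^{r}$-grading: since $\pi_{r}:\mathcal{T}_{r}^{*}\mathcal{B}_{N}^{r}\to \mathcal{B}_{N}^{r}$ is a $GL_{N}^{r}$-equivariant vector bundle with fiber $\bm{\mathfrak{n}_{r}}=\mathfrak{b}_{0}'\oplus\cdots\oplus\mathfrak{b}_{r-2}'\oplus\mathfrak{n}_{r-1}'$, the projection formula gives
\[
\pi_{r*}\pi_{r}^{*}\mathcal{L}_{\mathcal{B}_{N}^{r}}(\bm{k}_{\bm{-\mu}})=\bigoplus_{\bm{j}\in\mathbb{N}^{r}}\mathcal{L}_{\mathcal{B}_{N}^{r}}\bigl(S^{j_{0}}(\mathfrak{b}_{0}')^{*}\otimes\cdots\otimes S^{j_{r-1}}(\mathfrak{n}_{r-1}')^{*}\otimes\bm{k}_{\bm{-\mu}}\bigr),
\]
where the monomial $q_{0}^{j_{0}}\cdots q_{r-1}^{j_{r-1}}$ records the multi-degree. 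Combined with Kempf vanishing on $\mathcal{B}_{N}^{r}$ and the Leray spectral sequence for $\pi_{r}$, together with the main theorem $H^{>0}(\mathcal{T}_{r}^{*}\mathcal{B}_{N}^{r},\pi_{r}^{*}\mathcal{L}(\bm{k}_{\bm{-\mu}}))=0$, this identifies
\[
\chi\bigl(\mathcal{T}_{r}^{*}\mathcal{B}_{N}^{r},\pi_{r}^{*}\mathcal{L}_{\mathcal{B}_{N}^{r}}(\bm{k}_{\bm{-\mu}})\bigr)=\operatorname{ch}\bigl(H^{0}(\mathcal{T}_{r}^{*}\mathcal{B}_{N}^{r},\pi_{r}^{*}\mathcal{L}_{\mathcal{B}_{N}^{r}}(\bm{k}_{\bm{-\mu}}))\bigr)
\]
as an element of the representation ring tensored with $\mathbb{Z}[q_{1},\dots,q_{r}]$.

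Next I would invoke Finkelberg--Ionov's identity
\[
\chi\bigl(\mathcal{T}_{r}^{*}\mathcal{B}_{N}^{r},\pi_{r}^{*}\mathcal{L}_{\mathcal{B}_{N}^{r}}(\bm{k}_{\bm{-\mu}})\bigr)=\sum_{\bm{\lambda}\geqslant\bm{\mu}}K_{\bm{\lambda},\bm{\mu}}(q_{1},\dots,q_{r})\,\chi_{\bm{\lambda}}^{*}.
\]
Since $\bm{k}$ has characteristic zero, the graded $GL_{N}^{r}$-module $H^{0}$ decomposes into isotypic components and each $\bm{j}$-graded piece is a finite-dimensional rational $GL_{N}^{r}$-representation; hence the multiplicity of $V_{\bm{\lambda}}^{*}$ in the $\bm{j}$-graded piece is a nonnegative integer. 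Summing these multiplicities against $q_{0}^{j_{0}}\cdots q_{r-1}^{j_{r-1}}$ and comparing coefficients of $\chi_{\bm{\lambda}}^{*}$ with the displayed identity yields $K_{\bm{\lambda},\bm{\mu}}(q_{1},\dots,q_{r})\in\mathbb{N}[q_{1},\dots,q_{r}]$.

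Finally, the ``in particular'' statement is purely a specialization: setting $q_{1}=\cdots=q_{r}=q$ preserves nonnegativity of coefficients, and by Shoji's theorem \cite{shoji17} this specialization agrees with $K^{-}_{\bm{\lambda},\bm{\mu}}(q)$, giving $K^{-}_{\bm{\lambda},\bm{\mu}}(q)\in\mathbb{N}[q]$. There is essentially no obstacle in this corollary beyond carefully tracking the multigrading when identifying the coefficient of $\chi_{\bm{\lambda}}^{*}$; the substantive content is the vanishing theorem itself.
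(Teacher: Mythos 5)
Your proposal is correct and is exactly the argument the paper intends (the paper states the corollary without proof, relying on the vanishing theorem, Finkelberg--Ionov's Euler characteristic identity quoted in the introduction, and Shoji's identification of the specialization $q_1=\cdots=q_r=q$ with $K^-_{\bm{\lambda},\bm{\mu}}(q)$). The only cosmetic difference is that you index the grading by $\mathbb{N}^r$ while the paper calls it a $\mathbb{Z}/r\mathbb{Z}$-grading; your invocation of Kempf vanishing is also superfluous since the vanishing on the total space already forces the Euler characteristic to equal $\operatorname{ch}(H^0)$.
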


\section{Characteristic $p>0$ case}

In this section, we assume $\bm{k}$ to be algebraically closed of characteristic $p>0$ and prove the above higher cohomology vanishing theorem in this case.

All the results of Section~\ref{two} are still valid. We refer to \cite{Springer66} for the proof of analogous results needed in the characteristic $p>0$ case.
We can still construct the variety $U$ and consider the morphism $\phi: U\rightarrow \mathcal{N}_{r}\times V_{\bm{\lambda}}$, except that here $V_{\bm{\lambda}}$ is the Weyl module of $GL_{N}^{r}$ with highest weight $\bm{\lambda}$.
The collapsing $\phi: U\rightarrow \text{Im}(\phi)$ is proper and birational and Lemma \ref{canonical bundle} is also true.
All we have to do is to prove $H^{>0}(U,K(U))=0$.
We will need the following version of Grauert-Riemenschneider vanishing theorem in characteristic $p>0$ case due to Mehta and van der Kallen\cite{MK92}.

\begin{Thm}\label{MK}
  Let $\pi:X\rightarrow Y$ be a proper birational morphism of varieties in characteristic $p>0$ such that:
  
  1. $X$ is non-singular and there is $\sigma\in H^{0}(X,K(X)^{-1})$ such that $\sigma^{p-1}$ splits X.
  
  2. $D=\text{div}(\sigma)$ contains the exceptional locus of $\pi$ set theoretically.
  
Then $R^{i}\pi_{*}K(X)=0$ for $i>0$.
\end{Thm}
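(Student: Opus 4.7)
The plan is to adapt the argument of Mehta and van der Kallen: use Frobenius splitting to realise $R^{i}\pi_{*}K(X)$ as a direct summand of $R^{i}\pi_{*}K(X)^{p^{n}}$ for every $n\geqslant 1$, then exploit the hypothesis $D\supseteq\operatorname{Exc}(\pi)$ together with Grothendieck's theorem on formal functions to force the higher direct images to vanish.

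First I would unpack the Frobenius-splitting input. Via Grothendieck duality for the finite flat absolute Frobenius $F\colon X\to X$, an element of $H^{0}(X,K(X)^{1-p})$ corresponds to an $\mathcal{O}_{X}$-linear morphism $F_{*}\mathcal{O}_{X}\to\mathcal{O}_{X}$, and the hypothesis on $\sigma^{p-1}$ is precisely that this morphism $\varphi$ retracts the canonical inclusion $\mathcal{O}_{X}\hookrightarrow F_{*}\mathcal{O}_{X}$. Iterating gives a splitting of $\mathcal{O}_{X}\to F^{n}_{*}\mathcal{O}_{X}$ for every $n\geqslant 1$, and twisting by the line bundle $K(X)$ (using the projection formula) realises $K(X)$ as a direct summand of $F^{n}_{*}K(X)^{p^{n}}$ as $\mathcal{O}_{X}$-modules. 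Using the identity $\pi\circ F^{n}=F^{n}_{Y}\circ\pi$ and applying $R^{i}\pi_{*}$ yields split inclusions
\[
R^{i}\pi_{*}K(X)\hookrightarrow F^{n}_{Y*}R^{i}\pi_{*}K(X)^{p^{n}}
\]
of $\mathcal{O}_{Y}$-modules for every $i\geqslant 0$ and every $n\geqslant 1$.

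Next I would use the identification $K(X)\cong\mathcal{O}_{X}(-D)$ coming from the section $\sigma$, so that $K(X)^{p^{n}}\cong\mathcal{O}_{X}(-p^{n}D)$ and we obtain split injections $R^{i}\pi_{*}K(X)\hookrightarrow R^{i}\pi_{*}\mathcal{O}_{X}(-p^{n}D)$ for all $n$. Work locally at a point $y\in Y$. If $y\notin\pi(D)$, then $y\notin\pi(\operatorname{Exc}(\pi))$, so $\pi$ is an isomorphism over a neighbourhood of $y$ and the stalk automatically vanishes for $i>0$. If $y\in\pi(D)$, the fibre $\pi^{-1}(y)$ lies set-theoretically in $D$, so the ideal $\mathcal{I}_{D}$ restricts to a nilpotent ideal on every infinitesimal neighbourhood $X_{m}$ of $\pi^{-1}(y)$. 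By the theorem on formal functions, the completed stalk $(R^{i}\pi_{*}\mathcal{O}_{X}(-p^{n}D))^{\wedge}_{y}$ is $\varprojlim_{m}H^{i}(X_{m},\mathcal{O}_{X}(-p^{n}D)|_{X_{m}})$, and for a fixed $m$ the restriction of $\sigma^{p^{n}}$ to $X_{m}$ vanishes once $p^{n}$ exceeds the nilpotence index of $\mathcal{I}_{D}$ on $X_{m}$.

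The main technical obstacle is converting this level-by-level vanishing into the vanishing of $R^{i}\pi_{*}K(X)$ itself: one has a compatible family of direct-summand inclusions $R^{i}\pi_{*}K(X)\hookrightarrow R^{i}\pi_{*}\mathcal{O}_{X}(-p^{n}D)$ and must show the image in the inverse limit is zero. Coherence of $R^{i}\pi_{*}K(X)$ on $Y$ allows a reduction to a fixed stage $m$ (so that Nakayama controls the situation modulo $\mathfrak{m}_{y}^{m}$), and then the nilpotence of $\mathcal{I}_{D}|_{X_{m}}$ produces the required vanishing for $n$ sufficiently large. This is the heart of the argument of \cite{MK92}; the Frobenius-splitting direct-summand structure is indispensable, as it upgrades the asymptotic cohomological smallness of $\mathcal{O}_{X}(-p^{n}D)$ into the genuine vanishing of $R^{i}\pi_{*}K(X)$.
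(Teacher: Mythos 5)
The paper does not prove this statement: Theorem \ref{MK} is quoted from Mehta and van der Kallen \cite{MK92} and used as a black box, so there is no internal argument to compare yours against. Judged as a reconstruction of the argument of \cite{MK92}, your outline correctly assembles the two pillars: the splitting by $\sigma^{p-1}$ gives split ($p^{-n}$-linear) inclusions $K(X)\hookrightarrow F^{n}_{*}K(X)^{p^{n}}$ and hence $R^{i}\pi_{*}K(X)\hookrightarrow F^{n}_{Y*}R^{i}\pi_{*}\mathcal{O}_{X}(-p^{n}D)$; and over $y\in\pi(\operatorname{Exc}(\pi))$ the whole fibre lies in $D$, so $\mathcal{I}_{D}$ is nilpotent on each thickening $X_{m}$, which is where the theorem on formal functions and Krull's intersection theorem come in.

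However, the step you defer as ``the main technical obstacle'' is the entire content of the theorem, and the mechanism you sketch for it does not work as written. The sheaf $\mathcal{O}_{X}(-p^{n}D)|_{X_{m}}$ is still a line bundle on $X_{m}$, so neither it nor its cohomology vanishes for large $n$; what vanishes is the \emph{map} $\mathcal{O}_{X}(-p^{n}D)|_{X_{m}}\to\mathcal{O}_{X}(-D)|_{X_{m}}$ given by $\sigma^{p^{n}-1}$, whose image is $\mathcal{I}_{D}^{p^{n}-1}\cdot K(X)|_{X_{m}}=0$ once $p^{n}-1$ exceeds the nilpotency index. Your split \emph{inclusions} do not interact usefully with restriction to $X_{m}$, because $F^{n}$ does not preserve the ideals $\mathfrak{m}_{y}^{m}\mathcal{O}_{X}$; and ``Nakayama at a fixed stage $m$'' is not available, since formal functions identifies the completed stalk with $\varprojlim_{m}H^{i}(X_{m},\cdot)$ only pro-isomorphically, not term by term. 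The actual argument runs through the \emph{retraction}: because the splitting is the $(p-1)$-st power of $\sigma$, the retraction $F^{n}_{*}K(X)^{p^{n}}\to K(X)$ factors as multiplication by $\sigma^{p^{n}-1}$ (i.e., the ideal-sheaf inclusion $\mathcal{O}(-p^{n}D)\subseteq\mathcal{O}(-D)$) followed by the iterated Cartier/trace map. Hence the identity on the relevant images in $H^{i}(X_{m},K(X)|_{X_{m}})$ factors through a map that is zero for $p^{n}$ large, the transition maps of the inverse system $\{H^{i}(X_{m},K(X)|_{X_{m}})\}_{m}$ are eventually zero, the limit vanishes, and by formal functions and Krull the stalk $(R^{i}\pi_{*}K(X))_{y}$ vanishes. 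This factorization of the retraction --- precisely where the hypothesis that the splitting is a $(p-1)$-st power of a section $\sigma$ with $\operatorname{div}(\sigma)\supseteq\operatorname{Exc}(\pi)$ is used --- is the missing idea in your proposal.
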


Finkelberg and Ionov proved that the section $\sigma=f^{p-1}\varpi^{1-p}\in\Gamma(\mathcal{T}^{*}_{r}\mathcal{B}_{N}^{r},\omega^{1-p})$ splits $\mathcal{T}_{r}^{*}\mathcal{B}_{N}^{r}$ in \cite{FI}.
Here $\varpi$ is a local section of the canonical bundle $\omega$ of $\mathcal{T}_{r}^{*}\mathcal{B}_{N}^{r}$, $f$ is a regular function on $\mathcal{T}_{r}^{*}\mathcal{B}_{N}^{r}$ defined by
\[ f(g_{0},\dots,g_{r-1},x_{0},\dots,x_{r-1}):=\prod_{s=0}^{r-2}\prod_{j=1}^{N}\Delta_{j}(g_{s}x_{s}g_{s+1}^{-1})\cdot\prod_{j=1}^{N-1}\Delta_{j}(g_{r-1}x_{r-1}g_{0}^{-1}),\]
where $\Delta_{j}$ denotes the $j$-th principal minor in the upper left corner of a matrix.
Their construction is a generalization of results given in \cite{MK1992}.

\begin{Prop}\label{splitting}
$U$ is Frobenius split.
\end{Prop}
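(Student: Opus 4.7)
The plan is to extend Finkelberg and Ionov's Frobenius splitting of $\mathcal{T}_r^*\mathcal{B}_N^r$ to one on $U$ by twisting with the tautological section of the line bundle $\tau:U\to\mathcal{T}_r^*\mathcal{B}_N^r$. Write $\mathcal{L}:=\mathcal{L}_{\mathcal{T}_r^*\mathcal{B}_N^r}(\bm{k}_{\bm{\lambda}})$, so that $U$ is the total space of $\mathcal{L}$ over $\mathcal{T}_r^*\mathcal{B}_N^r$. The pullback $\tau^*\mathcal{L}\simeq\mathcal{L}_U(\bm{k}_{\bm{\lambda}})$ carries a canonical tautological section $t$ which, at a point $u\in U$, returns $u$ itself viewed as an element of $\mathcal{L}_{\tau(u)}=(\tau^*\mathcal{L})_u$; this $t$ vanishes to first order along the zero section $\mathcal{T}_r^*\mathcal{B}_N^r\hookrightarrow U$.

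The candidate splitting section for $U$ is
\[\sigma_U\;:=\;\tau^*\sigma\cdot t^{p-1},\]
where $\sigma=f^{p-1}\varpi^{1-p}$ is Finkelberg and Ionov's splitting of $\mathcal{T}_r^*\mathcal{B}_N^r$. The identity $K(U)=\tau^*(K(\mathcal{T}_r^*\mathcal{B}_N^r)\otimes\mathcal{L}^{-1})$, standard for the canonical bundle of the total space of a line bundle and consistent with Lemma~\ref{canonical bundle}, ensures $\sigma_U\in\Gamma(U,K(U)^{1-p})$.

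To verify that $\sigma_U$ defines a Frobenius splitting, I would apply the standard local splitting criterion: at every closed point it suffices that, in a suitable local coordinate system, the coefficient of the top monomial in the local expansion of $\sigma_U$ equals one. For any $\tilde{p}\in U$ lying over $p\in\mathcal{T}_r^*\mathcal{B}_N^r$, pick local coordinates $x_1,\dots,x_n$ at $p$ in which Finkelberg-Ionov's $\sigma$ exhibits the top monomial $(x_1\cdots x_n)^{p-1}$ with coefficient one, and choose a local trivialization of $\mathcal{L}$ near $p$ with fiber coordinate $\zeta$; then $t$ is locally $\zeta\cdot e$ for a local frame $e$ of $\mathcal{L}$, so that $t^{p-1}$ contributes the factor $\zeta^{p-1}$. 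The product $\sigma_U$ then takes the local form $((x_1\cdots x_n\zeta)^{p-1}+\text{other monomials})(dx_1\wedge\cdots\wedge dx_n\wedge d\zeta)^{1-p}$, with the top-monomial coefficient inherited from that of $\sigma$ at $p$, yielding the splitting.

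The main obstacle is the bookkeeping in this local computation: identifying the tautological section with the fiber coordinate under a local trivialization, tracking how a local frame of $\omega_{\mathcal{T}_r^*\mathcal{B}_N^r}^{1-p}$ combines with the dual of $d\zeta$ to yield a frame of $K(U)^{1-p}$, and verifying that the product with $t^{p-1}$ preserves the correct top-monomial coefficient at every closed point rather than only at one. These steps are routine once the total space of $\mathcal{L}$ is locally trivialized; the essential observation is that multiplication by $t^{p-1}$ supplies precisely the $\zeta^{p-1}$ factor needed to promote Finkelberg-Ionov's top monomial $(x_1\cdots x_n)^{p-1}$ to the top monomial $(x_1\cdots x_n\zeta)^{p-1}$ demanded by the splitting criterion on the enlarged coordinate system of $U$.
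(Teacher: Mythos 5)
Your proof is correct and takes essentially the same approach as the paper: both produce the splitting of $U$ by twisting the Finkelberg--Ionov splitting of $\mathcal{T}_r^*\mathcal{B}_N^r$ with the $(p-1)$-st power of the fiber coordinate on the total space of the line bundle $\mathcal{L}_{\mathcal{T}_r^*\mathcal{B}_N^r}(\bm{k}_{\bm{\lambda}})$. The paper simply cites the general fact that the total space of a line bundle over a Frobenius split variety is split, as~\cite[Lemma~1.1.11]{BK05}, and records the resulting explicit splitting $\sigma'(h\gamma^{t})=\sigma(h)\gamma^{t/p}$ (for $p\mid t$, and $0$ otherwise), which is precisely the morphism $F_*\mathcal{O}_U\to\mathcal{O}_U$ encoded by your section $\sigma_U=\tau^*\sigma\cdot t^{p-1}$ of $\omega_U^{1-p}$.
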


\begin{proof}
$U$ is the line bundle over $\mathcal{T}_{r}^{*}\mathcal{B}_{N}^{r}$ with sheaf of sections $\mathcal{L}_{\mathcal{T}_{r}^{*}\mathcal{B}_{N}^{r}}(\bm{k}_{\bm{\lambda}})$.
By~\cite[Lemma~1.1.11]{BK05}, we get that $U$ is Frobenius split.
The splitting $\sigma^{\prime}: F_{*}\mathcal{O}_{U}\rightarrow \mathcal{O}_{U}$ is given by
\[\sigma^{\prime}(h\gamma^{t})=
\begin{cases}
\sigma(h)\gamma^{t/p} & \text{if}\  t\equiv 0\pmod{p}\\
0 & \text{otherwise},

\end{cases}\]
where $h$ is a local section of $\mathcal{O}_{\mathcal{T}_{r}^{*}\mathcal{B}_{N}^{r}}$, and $\gamma$ is a local trivialization of the dual bundle $\mathcal{L}_{\mathcal{T}_{r}^{*}\mathcal{B}_{N}^{r}}(\bm{k}_{-\bm{\lambda}})$.
\end{proof}

The following useful result is~\cite[Lemma~3.3]{MK1992}.

\begin{Lem}\label{useful}
Let $g$ be a unipotent lower triangular matrix and let $M\in \mathfrak{g}$ be such that $M_{\leqslant r,\leqslant N-r}=0$ for some integer $1\leqslant r\leqslant N$. Then
\[\det((gMg^{-1})_{\leqslant r,\leqslant r})=\det(M_{\leqslant r,>N-r})\cdot\det((g^{-1})_{>N-r,\leqslant r})\]
\end{Lem}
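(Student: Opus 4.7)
The plan is to prove this by a direct block-matrix computation, exploiting the triangularity of $g$ together with the given vanishing pattern of $M$ to reduce both multiplications to a single $r \times r$ matrix product.

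First, I would record three simple observations. Since $g$ is unipotent lower triangular, the upper-left $r \times r$ block $g_{\leqslant r,\leqslant r}$ is itself unipotent lower triangular, so $\det(g_{\leqslant r,\leqslant r})=1$; moreover $g_{\leqslant r,>r}=0$. Likewise $g^{-1}$ is unipotent lower triangular, so $(g^{-1})_{\leqslant r,>r}=0$. The hypothesis on $M$ says that in its top $r$ rows, only the last $r$ columns can be nonzero, i.e. $M_{\leqslant r,\text{all}}=(0\mid M_{\leqslant r,>N-r})$ after the column split at $N-r$.

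Next, I would compute the top $r$ rows of $gM$. Because $g_{\leqslant r,>r}=0$, we have $(gM)_{\leqslant r,\text{all}}=g_{\leqslant r,\leqslant r}\cdot M_{\leqslant r,\text{all}}$, and plugging in the block form of $M$ gives
\[
(gM)_{\leqslant r,\leqslant N-r}=0,\qquad (gM)_{\leqslant r,>N-r}=g_{\leqslant r,\leqslant r}\cdot M_{\leqslant r,>N-r}.
\]
Thus the first $r$ rows of $gM$ are supported in the last $r$ columns. When we next form $(gM)g^{-1}$ and extract the $(\leqslant r,\leqslant r)$ block, only the rows $>N-r$ of $g^{-1}$ contribute, giving the clean identity
\[
(gMg^{-1})_{\leqslant r,\leqslant r}=g_{\leqslant r,\leqslant r}\cdot M_{\leqslant r,>N-r}\cdot (g^{-1})_{>N-r,\leqslant r}.
\]

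Finally, taking determinants of this $r \times r$ product and using $\det(g_{\leqslant r,\leqslant r})=1$ yields the desired formula. I do not anticipate any real obstacle here: once the column split at $N-r$ and the row split at $r$ are in place, the hypothesis forces every "wrong" block to vanish, and the statement becomes multiplicativity of the determinant on an $r \times r$ product. The only thing to be mindful of is bookkeeping of the two different splits (rows split at $r$, columns split at $N-r$), which is precisely why the $r \times r$ blocks $M_{\leqslant r,>N-r}$ and $(g^{-1})_{>N-r,\leqslant r}$ are square and their product makes sense.
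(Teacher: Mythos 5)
Your proof is correct. The paper does not actually supply a proof of this lemma: it simply cites it as Lemma~3.3 of Mehta--van~der~Kallen (the reference labeled MK1992), so there is no in-paper argument to compare against. Your block-matrix computation is the natural one and is valid: from $g_{\leqslant r,>r}=0$ you get $(gM)_{\leqslant r,\ast}=g_{\leqslant r,\leqslant r}M_{\leqslant r,\ast}$, the hypothesis $M_{\leqslant r,\leqslant N-r}=0$ then kills the first $N-r$ columns of this row strip, so in forming $(gMg^{-1})_{\leqslant r,\leqslant r}$ only the rows $>N-r$ of $g^{-1}$ survive, giving the $r\times r$ product $g_{\leqslant r,\leqslant r}\,M_{\leqslant r,>N-r}\,(g^{-1})_{>N-r,\leqslant r}$; multiplicativity of the determinant and $\det(g_{\leqslant r,\leqslant r})=1$ finish. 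The bookkeeping point you flag (row split at $r$, column split at $N-r$, both producing $r\times r$ blocks) is exactly the thing to be careful about, and you handled it correctly.
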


We prove that $\phi: U\rightarrow \text{Im}(\phi)$ satisfies the second condition in Theorem \ref{MK}, the proof being similar to that of~\cite[Proposition~4.5]{MK1992}.

\begin{Prop}
The divisor $D=\text{div}(\sigma^{\prime})$ contains the exceptional locus of $\phi: U\rightarrow \text{Im}(\phi)$.
\end{Prop}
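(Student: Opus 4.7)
The strategy is to reduce the claim to the analogous one for the collapsing $\varphi_r$ on $\mathcal{T}_r^*\mathcal{B}_N^r$, then verify the latter by a direct minor computation, paralleling \cite[Proposition~4.5]{MK1992}.

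First I identify $D$ set-theoretically. Let $\pi_U: U \to \mathcal{T}_r^*\mathcal{B}_N^r$ be the bundle projection. Since the trivialization $\gamma$ in Proposition~\ref{splitting} is nowhere zero and $\sigma = f^{p-1}\varpi^{1-p}$ with $\varpi$ a non-vanishing local trivialization of $\omega$, the construction of $\sigma'$ yields $|D| = \pi_U^{-1}(Z(f))$, where $Z(f)$ is the zero locus of $f$ on $\mathcal{T}_r^*\mathcal{B}_N^r$. Next I factor $\phi = (\varphi_r \circ \pi_U, \psi)$ with $\psi(u) = c \cdot (\prod_i g_i)\cdot\upsilon_{\bm{\lambda}}$ for a representative $(g,x,c)$ of $u$. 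Because $(\prod_i g_i)\cdot\upsilon_{\bm{\lambda}}$ is never zero, $\psi$ is injective with nonzero derivative along each fiber of $\pi_U$; a direct check of both the set-theoretic fibers and the differential $d\phi$ shows that $\phi$ is a local isomorphism at $u$ whenever $\varphi_r$ is one at $\pi_U(u)$. Hence $E_\phi \subset \pi_U^{-1}(E_{\varphi_r})$ for the respective exceptional loci, and it suffices to prove $E_{\varphi_r} \subset Z(f)$.

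By Proposition~\ref{birational}, $\varphi_r$ is an isomorphism over $\mathcal{N}_r^{\mathrm{reg}}$, so the reduced claim becomes $f(g,x) \ne 0 \Rightarrow \varphi_r(g,x) \in \mathcal{N}_r^{\mathrm{reg}}$. Assume $f(g,x)\ne 0$. The non-vanishing of $\Delta_N(y_s)$ for $s=0,\dots,r-2$ gives $\det(x_s) \ne 0$ (since $\det y_s = \det x_s\cdot\det g_s/\det g_{s+1}$), so each such $x_s$ is an invertible upper triangular matrix. The non-vanishing of $\Delta_{N-1}(y_{r-1})$ forces $\mathrm{rank}\, y_{r-1} \ge N-1$; combined with $\mathrm{rank}\, y_{r-1} = \mathrm{rank}\, x_{r-1} \le N-1$ (because $x_{r-1}$ is strictly upper), this yields $\mathrm{rank}\, x_{r-1} = N-1$, i.e., $x_{r-1}$ is regular nilpotent with nonzero superdiagonal. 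A direct superdiagonal computation of the cyclic composition $f_i$, written as a product of one strictly upper factor of full rank with $r-1$ invertible upper factors, then gives $(f_i)_{j,j+1} \ne 0$ for all $j$, so each $f_i$ is regular nilpotent and $\varphi_r(g,x) \in \mathcal{N}_r^{\mathrm{reg}}$.

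The main obstacle will be to organize the last step in a form matching \cite[Proposition~4.5]{MK1992}, via Lemma~\ref{useful}: after Bruhat-decomposing $g_s = u_s^- b_s$, one applies the lemma to $M_s = b_s x_s b_{s+1}^{-1}$ and the lower unipotent $u_s^-$ to express each $\Delta_j(y_s)$ as a product of block determinants, producing a systematic chain of implications through the cyclic structure. The asymmetric role of $x_{r-1}$ (strictly upper) versus $x_0, \ldots, x_{r-2}$ (merely upper) then has to be traced carefully around the cycle, mirroring the asymmetric product that defines $f$; the rank-theoretic shortcut sketched above is compact, but the Bruhat-decomposition route is the one that transparently mimics the Mehta--van der Kallen template.
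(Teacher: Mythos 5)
Your overall structure -- reduce from $\phi$ on $U$ to $\varphi_{r}$ on $\mathcal{T}_{r}^{*}\mathcal{B}_{N}^{r}$, then show that non-vanishing of $f$ forces the image into $\mathcal{N}_{r}^{\text{reg}}$ -- matches the paper's, and your reduction step is in fact spelled out more explicitly than the paper's one-line ``By Proposition~\ref{splitting}, we only need to prove\dots''.
Where you genuinely diverge is in the key computation. The paper restricts to the open Bruhat cell $X=X_{N}^{r}\times\bm{\mathfrak{n}_{r}}$, rewrites $f$ as $\hat{f}$ in lower-unipotent coordinates, and invokes Lemma~\ref{useful} (Mehta--van der Kallen's minor identity) -- precisely the ``Bruhat-decomposition route'' you mention at the end as the template-faithful option. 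Your primary argument instead is a rank-theoretic shortcut: from $f\ne 0$ you only extract $\Delta_{N}(g_{s}x_{s}g_{s+1}^{-1})\ne 0$ for $s<r-1$ and $\Delta_{N-1}(g_{r-1}x_{r-1}g_{0}^{-1})\ne 0$, conclude that $x_{0},\dots,x_{r-2}$ are invertible upper triangular and $x_{r-1}$ is strictly upper of rank $N-1$ (hence a single Jordan block, i.e.\ regular nilpotent), and then observe that each cyclic composition $f_{i}$ is a strictly upper triangular matrix of rank $N-1$, hence regular nilpotent; since $f_{i}^{\bm{y}}$ for $\bm{y}=\varphi_{r}(g,x)$ is conjugate to $f_{i}^{\bm{x}}$, the image lies in $\mathcal{N}_{r}^{\text{reg}}$. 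This is correct, is noticeably simpler than the paper's route, avoids Lemma~\ref{useful} altogether, and shows that only a small subset of the principal-minor conditions entering $f$ is actually needed for the containment of the exceptional locus -- a nice observation. Two small points worth tightening if you write this up: (i) when you pass from $\mathrm{rank}\,x_{r-1}=N-1$ to ``nonzero superdiagonal'', the cleanest statement is that a strictly upper triangular nilpotent has rank $N-1$ iff its kernel is one-dimensional iff it is a single Jordan block, which is equivalent to all superdiagonal entries being nonzero; and (ii) state explicitly that you are computing the $f_{i}$ first for the fiber coordinates $\bm{x}$ and then transporting to $\bm{y}=\varphi_{r}(g,x)$ by conjugation (a conjugation-invariant property), since the $g$'s do not telescope in the naive product.
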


\begin{proof}
By Proposition \ref{splitting}, we only need to prove that $\text{div}(\sigma)$ contains the exceptional locus $\mathcal{N}_{r}\backslash\mathcal{N}_{r}^{\text{reg}}$. As in~\cite[Lemma~4.7]{FI}, we consider the open cell $X:=X_{N}^{r}\times\bm{\mathfrak{n}_{r}}\subset \mathcal{T}_{r}^{*}\mathcal{B}_{N}^{r}$, where $X_{N}\subset\mathcal{B}_{N}$ is the open orbit of the strictly lower triangular subgroup $U_{N}^{-}\subset GL_{N}$. Identifying $X$ with $\hat{X}:=(U_{N}^{-}\times\mathfrak{b}_{N})^{r-1}\times(U_{N}^{-}\times\mathfrak{u}_{N})$, we have $f|_{X}=\hat{f}|_{\hat{X}}$ where
\[\hat{f}(g_{0},\dots,g_{r-1},x_{0},\dots,x_{r-1}):=\prod_{s=0}^{r-2}\prod_{j=1}^{N}\Delta_{j}(g_{s}x_{s}g_{s}^{-1})\cdot\prod_{j=1}^{N-1}\Delta_{j}(g_{r-1}x_{r-1}g_{r-1}^{-1}),\]
where $g_{0\leqslant s\leqslant r-1}$ are all lower triangular unipotent matices. Since $X$ is dense, we can restrict to this set. We have to show that if $\hat{f}(g_{0},\dots,g_{r-1},x_{0},\dots,x_{r-1})\neq 0$, then $(g_{0}x_{0}g_{1}^{-1},\dots,g_{r-1}x_{r-1}g_{0}^{-1})$ belongs to $\mathcal{N}_{r}^{\text{reg}}$. This follows easily by computations using Lemma \ref{useful}.
\end{proof}

\section{Positivity of the quiver Kostka-Shoji polynomials for cyclic and ADE quivers}

\subsection{Quiver Representations} Let $\bmk$ be an algebraically closed field of characteristic $0$. The vector spaces we consider are assumed to be over $\bmk$. A quiver $Q=(Q_{0},Q_{1})$ is a directed graph with vertex set $Q_{0}$ and arrow set $Q_{1}$. For an arrow $j\in Q_{1}$, we
denote by $h(j), t(j)\in Q_{0}$ its head and tail. We assume that there is at most one arrow from each vertex to another. In this section, we will deal only with the cyclic quivers and finite
ADE quivers. Let $V=\oplus_{i\in Q_{0}}V_{i}$ be a $Q_{0}$-graded vector spaces with dimension vector $d=\underline{\text{dim}} V=(d_{i})_{i\in Q_{0}}$. We denote by $E_{d}=\prod_{j\in Q_{1}}\text{Hom}_{\bmk}(V_{h(j)},V_{t(j)})$ the representation space on $V$. The group $G_{d}=\prod_{i\in Q_{0}}GL_{d_{i}}(\bmk)$ acts on $E_{d}$ via $(g_{i})\cdot(x_{j})=(g_{t(j)}x_{j}g_{h(j)}^{-1})$ by base changes.

\subsection{Lustig's iterated convolution diagram}Let $\bm{i}=(i_{1},i_{2},\dots,i_{\nu})\in Q_{0}^{\nu}$, $\bm{a}=(a_{1},a_{2},\dots,a_{\nu})\in \mathbb{N}^{\nu}$. The variety $\mathcal{F}_{\bm{i},\bm{a}}$ of all flags of type $(\bm{i},\bm{a})$ in $V$ consists of filtrations of $Q_{0}$-graded subspaces
$V=\mathcal{F}^{0}\supset\mathcal{F}^{1}\supset\cdots\supset\mathcal{F}^{\nu}$ such that $\mathcal{F}^{k-1}/\mathcal{F}^{k}$ is pure of weight $i_{k}$ and dimension $a_{k}$. Lustig's iterated convolution diagram $X_{\bm{i},\bm{a}}$ is the incidence variety whose points are pairs $(\varphi,\mathcal{F})\in E_{d}\times \mathcal{F}_{\bm{i},\bm{a}}$ such that $\varphi(\mathcal{F}^{k})\subset \mathcal{F}^{k}$ for all $k=0,1,\dots,\nu$. The natural projection $X_{\bm{i},\bm{a}}\rightarrow \mathcal{F}_{\bm{i},\bm{a}}$ makes $X_{\bm{i},\bm{a}}$ a homogeneous bundle with typical fiber $Y_{\bm{i},\bm{a}}$, i.e., by fixing a flag $\mathcal{F}^{*}$, we get an isomorphism $X_{\bm{i},\bm{a}}\simeq G_{d}\times^{P_{\bm{i},\bm{a}}}Y_{\mono}$, where $P_{\mono}$ is the stabilizer of $\mathcal{F}^{*}$ under the $G_{d}$-action. The other projection $\pi_{\mono}: X_{\mono}\rightarrow E_{d}$ is a generalization of the
Grothendieck-Springer morphism, which is called collapsing morphism in the sense of Kempf.

Let $\hat{Q}$ be an acyclic subquiver with the same vertex set, let $V_{n}$ be the standard $GL_{n}$-module, $V=\oplus_{i\in Q_{0}}V_{n}^{(i)}$, and $\mathcal{F}^{Q_{0}}_{n}$ is the $Q_{0}$-product of complete flags in $V_{n}$. Orr and Shimozono \cite{OS17} consider the following instance of convolution diagram $Z^{Q,\hat{Q}}$, which consists of pairs $(\phi_{a},F^{(i)})\in E_{d}\times \mathcal{F}_{n}^{Q_{0}}$ such that
\begin{equation}
\phi_{a}(F_{k}^{(h(a))})\subset
\begin{cases}
F_{k}^{(t(a))} & \text{if } a\in \hat{Q}_{1}\\
F_{k-1}^{(t(a))} & \text{if } a\in Q_{1}\backslash\hat{Q}_{1}.\label{eq1}
\end{cases}
\end{equation}
By \cite[Lemma~1.8]{Lus91},  there does exist a pair $(\mono)$ such that $Z^{Q,\hat{Q}}\simeq X_{\mono}$.
 
 \subsection{Relations with \cite{Pan10}}Let $G$ be a reductive group over $\bmk$ (of characteristic $0$),
 $P$ a parabolic subgroup, $V$ a $G$-stable vector space, and $N$ a $P$-stable subspace.
 Set $Z=G\times^{P}N$ and $\pi:Z\rightarrow G\cdot N$, let $|N|$ be the sum of $T$-weights
 in $N$. For any homogeneous bundle $Z$, Panyushev generalized Brylinski's method and construct the associated polynomial $\mathfrak{m}_{\lambda,N}^{\mu}(q)$, as the transition matrix between the graded Euler characteristic function $\chi_{Z}^{\mu}$ of line bundles on $Z$ and the character of simple modules $\chi_{\lambda}$, which he call generalized
 Kostka-Foulkes polynomials. Using Broer's method, Panyushev proved that if the collapsing
 $G\times^{P}N\rightarrow G\cdot N$ is generically finite (i.e., $\dim Z=\dim G\cdot N$), then
 $\mathfrak{m}_{\lambda,N}^{\mu}(q)$ has non-negative coefficients for any $\lambda\in X^{+}(T)$ and $\mu\gtrdot |N|-|\mathfrak{n}|$. Here $\mathfrak{n}$
 is the nil-radical of $P$, and for $\lambda_{1},\lambda_{2}\in X(T)$, we say
 $\lambda_{1}\gtrdot\lambda_{2}$ when $\lambda_{1}-\lambda_{2}\in X^{+}(T)$, that is
 $\lambda_{1}$ lies in the dominant chamber translated by $\lambda_{2}$. If the collapsing fails to be generically finite, then he proved that the positivity still holds for $\mu\gtrdot \rho_{p}+|N|-|\mathfrak{n}|$, where $\rho_{p}$ is the sum of fundamental weights corresponding to $P$.
 
 Applying Panyushev's construction to $Z^{Q,\hat{Q}}$, we get the
 {\em quiver Kostka-Shoji polynomials} $K_{\bm{\lambda},\bm{\mu}}^{Q,\hat{Q}}(q_{Q_{1}})$ introduced
 by Orr and Shimozono. These are polynomials in $|Q_{1}|$ variables since $Y_{\mono}$ is a
 $({\mathbb C}^\times)^{Q_{1}}$-module. Orr and Shimozono conjecture that these polynomials
 have nonnegative coefficients.
 As an immediate corollary of Panyushev's positivity result
 we obtain
 \begin{Thm}
   \label{51}
   The coefficients of $K_{\bm{\lambda},\bm{\mu}}^{Q,\hat{Q}}(q_{Q_{1}})$ are positive for $\bm{\lambda}\in X^{+}(T^{Q_{0}})$ and $\bm{\mu}\gtrdot \rho_{P_{\mono}}+|Y_{\mono}|-|\mathfrak{n}_{P_{\mono}}|$
   (we identify $Z^{Q,\hat{Q}}$ with $X_{\mono}\simeq G_{d}\times^{P_{\mono}}Y_{\mono}$).
 \end{Thm}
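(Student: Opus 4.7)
The plan is to deduce Theorem~\ref{51} directly from Panyushev's positivity theorem \cite{Pan10} by matching the convolution diagram to Panyushev's general setup, and to invoke the case where the collapsing may fail to be generically finite, which is precisely why the extra shift by $\rho_{P_{\mono}}$ appears in the bound.

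First, I would invoke the isomorphism $Z^{Q,\hat{Q}}\simeq X_{\mono}\simeq G_{d}\times^{P_{\mono}}Y_{\mono}$ supplied by \cite[Lemma~1.8]{Lus91}, already recorded above. Under this identification the incidence condition \eqref{eq1} defining $Z^{Q,\hat{Q}}$ becomes the condition that elements of $Y_{\mono}$ stabilize a fixed flag $\mathcal{F}^{*}$ of type $(\bm{i},\bm{a})$, with $P_{\mono}\subset G_{d}$ its stabilizer. Hence Panyushev's framework applies verbatim with $G=G_{d}$, $P=P_{\mono}$, $V=E_{d}$, and $N=Y_{\mono}\subset E_{d}$, and the collapsing $Z^{Q,\hat{Q}}\to G_{d}\cdot Y_{\mono}\subset E_{d}$ is exactly Panyushev's $\pi$.

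Second, I would match the polynomial invariants. Panyushev's construction produces $\mathfrak{m}_{\bm{\lambda},N}^{\bm{\mu}}(q)$ as the transition coefficient between $\chi_{Z}^{\bm{\mu}}$ and the characters $\chi_{\bm{\lambda}}$ of simple $G_d$-modules; by definition of Orr--Shimozono, $K_{\bm{\lambda},\bm{\mu}}^{Q,\hat{Q}}(q_{Q_{1}})$ is obtained from the same transition, but refined by the $(\mathbb{C}^{\times})^{Q_{1}}$-action that scales each component of $Y_{\mono}$ separately. Because this torus commutes with the $G_{d}$-action, Panyushev's argument respects the multigrading: every graded piece of $\Gamma(Z,\mathcal{L}(\bm{k}_{-\bm{\mu}}))$ he produces, and the resolution of its character in terms of $\chi_{\bm{\lambda}}$, refines coefficient by coefficient to the $|Q_{1}|$-variable setting.

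Third, I would apply the second half of Panyushev's theorem, which does not require generic finiteness of the collapsing and guarantees nonnegativity of $\mathfrak{m}_{\bm{\lambda},N}^{\bm{\mu}}(q)$ whenever $\bm{\mu}\gtrdot\rho_{p}+|N|-|\mathfrak{n}|$. With $N=Y_{\mono}$, $P=P_{\mono}$, and $\mathfrak{n}=\mathfrak{n}_{P_{\mono}}$ this is exactly the hypothesis of Theorem~\ref{51}; the refined multivariable coefficients inherit nonnegativity from the ungraded ones since they are obtained by splitting each nonnegative term according to the $(\mathbb{C}^{\times})^{Q_{1}}$-weight.

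The main obstacle is the compatibility between Panyushev's one-variable grading and the $|Q_{1}|$-variable refinement: one has to check that his key vanishing input (a suitable Grauert--Riemenschneider / Kempf vanishing statement for $K(U)\otimes\mathcal{L}(\bm{k}_{-\bm{\lambda}-\bm{\mu}})$) is equivariant for the full torus $(\mathbb{C}^{\times})^{Q_{1}}$, so that the induced $H^{0}$ decomposes into $Q_{1}$-graded pieces each with nonnegative multiplicities. Once this equivariance is recorded, Theorem~\ref{51} is a direct consequence.
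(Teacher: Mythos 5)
Your proposal is correct and is essentially the same argument the paper has in mind: the paper presents Theorem~\ref{51} as an ``immediate corollary'' of Panyushev's positivity theorem in the non-generically-finite case, applied to $Z^{Q,\hat{Q}}\simeq G_d\times^{P_{\mono}}Y_{\mono}$, which is precisely what you do. Your explicit remark that the one-variable proof refines to the $|Q_1|$-variable grading because $(\mathbb{C}^\times)^{Q_1}$ commutes with $G_d$ (so Panyushev's $H^0$ and its decomposition split along the finer grading) is exactly the compatibility the paper leaves implicit when it says the polynomials ``are polynomials in $|Q_1|$ variables since $Y_{\mono}$ is a $(\mathbb{C}^\times)^{Q_1}$-module.''
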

 
 More generally, applying the above construction to an arbitrary Lusztig's convolution
 diagram $X_{\mono}$, we denote the corresponding polynomial by
 $K_{\bm{\lambda},\bm{\mu}}^{\mono}(q_{Q_{1}})$.

 \subsection{Desingularizations in cyclic and finite ADE types} To prove the positivity for quiver Kostka-Shoji polynomials $K_{\bm{\lambda},\bm{\mu}}^{\mono}(q_{Q_{1}})$, it is important to find out when the collapsing morphism $\pi_{\mono}:X_{\mono}\rightarrow \text{\rm{Im}}(\pi_{\mono})$ is generically finite. Here the image of $\pi_{\mono}$ is an orbit closure $\overline{\mathcal{O}_{V}}$ of a representation $V$.
 There are many investigations on desingularization of orbit closures of quiver
 representations, starting from~\cite{Lus90,Lus91}. In this subsection we recall
 Reineke's \cite{Rei03} and
 Schiffmann's \cite{Sch04} results on finite ADE and cyclic quivers.

 For finite ADE quivers, by Gabriel's theorem, the isomorphism classes of indecomposable representations are in bijection with the set $R^{+}$ of positive roots for the Dynkin diagram, given by $M_{\alpha}\leftrightarrow \alpha$ with $\underline{\text{dim}} M_{\alpha}=\alpha$. Here we identify $R^{+}$ with a subset of $\mathbb{N}Q_{0}$ by writing each positive root $\alpha$ as a sum of simple roots. Moreover, the category $\text{\rm{Rep}}(Q)$ is representation directed, namely there exists a total order $\alpha_{1},\dots,\alpha_{m}$ on $R^{+}$ such that
 $\text{\rm{Hom}}(M_{\alpha_a},M_{\alpha_b})=0$ for any $b<a$ and
 $\text{\rm{Ext}}^{1}(M_{\alpha_b},M_{\alpha_a})=0$ for any $b\leqslant a$,
 see~\cite[Proposition~4.12]{Lus90}. This total order induces a total order on the set
 of vertices $Q_0$ such that $i<j$ whenever there is an arrow $i\to j$.
 More generally, folllowing~\cite[Definition~2.1]{Rei03}, we consider a
 directed partition of positive roots $R^{+}=\bigcup_{t=1}^{s}\mathcal{I}_{t}$,
 such that $\text{\rm{Hom}}(M_{\alpha},M_{\beta})=0$ for any
 $\alpha\in\mathcal{I}_{t},\beta\in \mathcal{I}_{u},u<t$, and
 $\text{\rm{Ext}}^{1}(M_{\alpha},M_{\beta})=0$ for any
 $\alpha\in \mathcal{I}_{t},\beta\in \mathcal{I}_{u},t\leqslant u$.
 Hence for any representation $V$, it decomposes as $V=\oplus_{t=1}^{s}M_{(t)}$,
 where $M_{(t)}$ is the direct sum of all indecomposable summands of $V$ which
 are isomorphic to some $M_{\alpha}$ for $\alpha\in \mathcal{I}_{t}$.
 Reineke associates a sequence $\bm{i}$ to a directed partition
 $\mathcal{I}_*=(\mathcal{I}_1,\ldots,\mathcal{I}_s)$ as follows.
 For $t=1,\ldots,s$, he defines a sequence $\omega_t$ of vertices by writing the elements
 of the set $\{i\in Q_{0}: \alpha_{i}\neq0 \text{ for some }\alpha\in \mathcal{I}_{t} \}$
 in ascending order. Now $\bm{i}$ is the concatenation $\omega_1\ldots\omega_s$.
 Furthermore, a sequence $\bm{a}$ is defined as $\bm{a}=(\bm{a}_{1},\dots,\bm{a}_{s})$, where
 $\bm{a}_{t}=(\underline\dim{}_{i_{1}}M_{(t)},\dots,\underline\dim{}_{i_{b}}M_{(t)})$
 for $\omega_{t}=(i_{1},\dots,i_{b})$. Then for this pair $(\mono)$ Reineke proved that
 $\pi_{\mono}:X_{\mono}\rightarrow \overline{\mathcal{O}_{V}}$ is a resolution of
 singularities \cite[Theorem~2.2]{Rei03}.
 
 For cyclic quiver $\tilde{A}_{r-1}$, any indecomposable nilpotent representation is
 isomorphic to some $S_{i}[l]$ (a representation of total dimension $l$ with the cosocle
 at the vertex $i\in{\mathbb Z}/r{\mathbb Z}$). Let $(V,\varphi)\in E_{d}$ be a
 representation, then
 $0\subset \K\varphi\subset\K\varphi^{2}\subset\cdots\subset\K\varphi^{\nu}=V$
 gives a filtration of $V=\oplus_{i\in \mathbb{Z}/r\mathbb{Z}}V_{i}$. Let
 $m^{t}=\underline\dim\K\varphi^{t}/\K\varphi^{t-1}$, and $\mathcal{F}_{m}$ be the variety of all flags
 $$0=F_{0}\subset F_{1}\subset\cdots\subset F_{\nu}=V$$
 of ${\mathbb Z}/r{\mathbb Z}$-graded vector spaces such that
 $\underline\dim F_{t}/F_{t-1}=m^{t}$. Then the variety $X\subset E_{d}\times \mathcal{F}_{m}$
 formed by all the pairs $(\varphi,F)$ such that $\varphi(F_{t})\subset F_{t-1}$
 can be identified with $X_{\mono}$ for some $(\mono)$. Schiffmann proved that $\pi_{\mono}:X_{\mono}\rightarrow \overline{\mathcal{O}_{V}}$ is a resolution of singularities \cite[Proposition~1.1]{Sch04}. In particular, let $d=(n,\dots,n)$ and $\varphi$ be the indecomposable representation $S_{1}[rn]$, then we get the vector bundle $\mathcal{T}_{r}^{*}\mathcal{B}_{N}^{r}$. Hence Proposition \ref{birational} is a particular case of \cite[Proposition~1.1]{Sch04}.

 Combining the above arguments with Panyushev's positivity results,
 we obtain for cyclic and ADE quivers the following strengthening of~Theorem~\ref{51}:

 \begin{Thm}
   Let $Q$ be an $ADE$ or a cyclic quiver, and let $(\mono)$ be the sequences described in the
   above paragraphs. Then
$K_{\bm{\lambda},\bm{\mu}}^{\mono}(q_{Q_{1}})$ has positive coefficients for any $\bm{\lambda}$ dominant and $\bm{\mu}\gtrdot |Y_{\mono}|-|\mathfrak{n}_{P_{\mono}}|$.
\end{Thm}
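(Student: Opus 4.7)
The plan is to combine the two ingredients already assembled in the section: Panyushev's sharper positivity statement, which drops the $\rho_{P_{\bm{i},\bm{a}}}$ shift whenever the collapsing $\pi_{\bm{i},\bm{a}}$ is generically finite, together with the desingularization theorems of Reineke and Schiffmann which guarantee exactly this generic finiteness in the $ADE$ and cyclic cases.

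First, I would unravel the hypothesis. Fix a representation $V\in E_{d}$ and let $(\bm{i},\bm{a})$ be the pair associated to $V$ by Reineke's directed partition construction (in the $ADE$ case) or by the $\mathrm{Ker}\,\varphi^t$ construction (in the cyclic case). By \cite[Theorem~2.2]{Rei03} (respectively \cite[Proposition~1.1]{Sch04}), the collapsing
\[
\pi_{\bm{i},\bm{a}}: X_{\bm{i},\bm{a}} \simeq G_{d}\times^{P_{\bm{i},\bm{a}}}Y_{\bm{i},\bm{a}} \longrightarrow \overline{\mathcal{O}_{V}}
\]
is a resolution of singularities. In particular it is proper, surjective and birational onto its image, so it is generically finite: $\dim X_{\bm{i},\bm{a}} = \dim G_{d}\cdot Y_{\bm{i},\bm{a}}$.

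Next I would invoke Panyushev's result in its sharp form (the first clause in the summary just before Theorem~\ref{51}): for a homogeneous vector bundle $Z=G\times^{P}N$ whose collapsing $Z\to G\cdot N$ is generically finite, the generalized Kostka--Foulkes polynomial $\mathfrak{m}_{\lambda,N}^{\mu}(q)$ has nonnegative coefficients for every dominant $\lambda$ and every $\mu\gtrdot |N|-|\mathfrak{n}|$, with no additional $\rho_{P}$ shift. Applying this with $G=G_{d}$, $P=P_{\bm{i},\bm{a}}$, $N=Y_{\bm{i},\bm{a}}$, and $\mathfrak{n}=\mathfrak{n}_{P_{\bm{i},\bm{a}}}$, and identifying $Z$ with $X_{\bm{i},\bm{a}}=Z^{Q,\hat{Q}}$, yields that the coefficients of $K^{\bm{i},\bm{a}}_{\bm{\lambda},\bm{\mu}}(q_{Q_{1}})$ are nonnegative for $\bm{\lambda}\in X^{+}(T^{Q_{0}})$ and $\bm{\mu}\gtrdot |Y_{\bm{i},\bm{a}}|-|\mathfrak{n}_{P_{\bm{i},\bm{a}}}|$, as claimed.

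The only genuine obstacle is bookkeeping: one must verify that the pair $(\bm{i},\bm{a})$ produced by Reineke/Schiffmann coincides, under the dictionary of \cite[Lemma~1.8]{Lus91}, with the pair indexing the convolution diagram $X_{\bm{i},\bm{a}}$ to which Panyushev's framework is being applied, so that the polynomial $K^{\bm{i},\bm{a}}_{\bm{\lambda},\bm{\mu}}(q_{Q_{1}})$ on the left-hand side is indeed the Panyushev polynomial $\mathfrak{m}^{\bm{\mu}}_{\bm{\lambda},Y_{\bm{i},\bm{a}}}(q_{Q_{1}})$ on the right. Once the parabolic $P_{\bm{i},\bm{a}}$, the $P_{\bm{i},\bm{a}}$-module $Y_{\bm{i},\bm{a}}$, and its decomposition into $(\mathbb{C}^{\times})^{Q_{1}}$-weight spaces are matched in both constructions, Proposition~\ref{birational} is recovered as the special case $d=(n,\dots,n)$, $V=S_{1}[rn]$, and the general statement follows immediately.
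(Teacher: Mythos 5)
Your proposal is correct and follows essentially the same route as the paper: you use Reineke's and Schiffmann's resolution theorems to establish that the collapsing $\pi_{\bm{i},\bm{a}}$ is generically finite, and then apply Panyushev's sharper positivity criterion (the one with no $\rho_{P}$ shift), exactly as the paper does when it says ``combining the above arguments with Panyushev's positivity results.'' The bookkeeping step you flag at the end (matching $(\bm{i},\bm{a})$ between the Reineke/Schiffmann construction and the convolution diagram via \cite[Lemma~1.8]{Lus91}) is also the implicit identification the paper relies on, so the two arguments coincide.
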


\section*{Acknowledgement}
I got to know this question from Professor Michael Finkelberg when he visited AMSS and gave a course on affine Grassmannians for two weeks in January of 2017.
I am very grateful to him for stimulating discussions, and to Professor Wen-Wei Li who pointed out to me the relation of this paper to \cite{Pan10}.
Thanks also for Professor Nanhua Xi and Professor Sian Nie's helpful suggestions on the first draft of this paper.

{
\bibliography{hy}
\bibliographystyle{alpha}
}
\end{document}